\newcommand{\arxiv}[1]{{\tt \href{http://arxiv.org/abs/#1}{arXiv:#1}}}
\newcommand{\old}[1]{}
\newcommand{\moniker}[1]{{\em (#1)}}
\newcommand{\dmoniker}[1]{(#1)}
\DeclareRobustCommand{\SkipTocEntry}[5]{}
\newtheorem{theorem}{Theorem}
\newtheorem{prop}[theorem]{Proposition}
\newtheorem{lemma}[theorem]{Lemma}
\newtheorem{corollary}[theorem]{Corollary}
\newtheorem{conjecture}[theorem]{Conjecture}
\theoremstyle{remark}
\numberwithin{counter}{section}
\theoremstyle{definition}
\newtheorem{definition}{Definition}
\def\av{\mathrm{av}}
\def\Stab{\mathcal{S}}
\def\Wave{\mathcal{W}}
\def\Rec{\mathrm{Rec}}
\def\one{\mathbf{1}}
\def\N{\mathbb{N}}
\def\Z{\mathbb{Z}}
\def\EE{\mathbb{E}}
\def\PP{\mathbb{P}}
\def\eps{\epsilon}
\begin{document}

\title[Threshold state]{Threshold state and a conjecture of Poghosyan, Poghosyan, Priezzhev and Ruelle}

\author[Lionel Levine]{Lionel Levine}

\address{Department of Mathematics, Cornell University, Ithaca, NY 14853. {\tt \url{http://www.math.cornell.edu/~levine}}}
\thanks{The author was partly supported by NSF grant DMS-1243606.}

\date{May 20, 2014}
\keywords{abelian sandpile, absorbing state phase transition, avalanche, burst size, chip-firing, density conjecture, fixed-energy sandpile, Markov renewal theorem, self-organized criticality, size-biasing, stabilizability, uniform spanning tree}
\subjclass[2010]{82C27, 82C26, 60K15, 60K35, 60J10, 05C45}

\begin{abstract}
We prove a precise relationship between the threshold state of the fixed-energy sandpile and the stationary state of Dhar's abelian sandpile: In the limit as the initial condition $s_0$ tends to $-\infty$, the former is obtained by size-biasing the latter according to \emph{burst size}, an avalanche statistic. The question of whether and how these two states are related has been a subject of some controversy since 2000.

The size-biasing in our result arises as an instance of a Markov renewal theorem, and implies that the threshold and stationary distributions are not equal even in the $s_0 \to -\infty$ limit. We prove that nevertheless in this limit the total amount of sand in the threshold state converges in distribution to the total amount of sand in the stationary state, confirming a conjecture of Poghosyan, Poghosyan, Priezzhev and Ruelle.
\end{abstract}

\maketitle

\section{Introduction}


How much memory does a critical system retain of its pre-critical past? This is the question lurking beneath the prediction \cite{VDMZ2} that $\zeta_{s} = \zeta_{\tau}$: the stationary density of the abelian sandpile should equal the threshold density of the fixed energy sandpile (the density of sand at which it becomes permanently unstable; precise definitions are given below in \textsection\ref{s.laplacian}--\ref{s.densities}). In \cite{FLW, FLW2} the above prediction was refuted on a few simple graphs where $\zeta_{\tau}$ can be computed exactly, and simulations on the two-dimensional torus $\Z_N \times \Z_N$ 
show that $\zeta_{\tau} \approx 2.125288$ differs slightly from 
$\zeta_s = 2.125000$ (the exact evaluation $\zeta_s = 17/8$ was recently proved in \cite{PPR,KW}). 

Why are these values so close if they are not equal? Jo and Jeong \cite{JJ} pointed out that $\zeta_{\tau}$ depends on the initial condition: the threshold state cannot be expected to have universal properties because it retains some memory of its pre-critical past.
Poghosyan et al.\ \cite{PPPR} performed numerical experiments to estimate $\zeta_{\tau}(h)$ for constant initial conditions $h \equiv 2,1,0,-1,-2$. On the basis of those experiments they conjectured the following. 
\begin{conjecture} \moniker{Poghosyan, Poghosyan, Priezzhev, Ruelle \cite{PPPR}}
\label{c.pppr}
	$\zeta_{\tau}(h) \to \zeta_s$ as $h \to -\infty$.
\end{conjecture}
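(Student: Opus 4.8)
The plan is to deduce the conjecture from the paper's main structural result — that in the limit $s_0 \to -\infty$ the threshold state is the stationary state size-biased by burst size — and then to isolate the one feature of that size-biasing which the density statement actually needs. Write $\nu$ for the stationary measure (uniform on recurrent configurations) and $\nu_B$ for its size-biasing, $\nu_B(\eta) \propto B(\eta)\,\nu(\eta)$, where $B(\eta)$ is the burst size. Since $\zeta_s$ is the stationary amount of sand per site and, by the main theorem, the limiting threshold density is the corresponding quantity under $\nu_B$, the conjecture is equivalent to the assertion that size-biasing by $B$ does not change the law of the total amount of sand $S = \sum_v \eta(v)$. Indeed, once $\mathrm{Law}_{\nu_B}(S) = \mathrm{Law}_\nu(S)$ is known on each finite graph, dividing by the number of sites and passing to the infinite-volume limit gives $\lim_{h \to -\infty} \zeta_\tau(h) = \zeta_s$; and this distributional identity is exactly the stronger statement advertised in the abstract.

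So the first real step is to reduce the distributional identity to a statement about a single conditional expectation. For every level $k$,
\[
\PP_{\nu_B}(S = k) = \frac{\EE_\nu\!\left[B \, \mathbbm{1}_{\{S = k\}}\right]}{\EE_\nu[B]} = \PP_\nu(S = k)\cdot \frac{\EE_\nu[B \mid S = k]}{\EE_\nu[B]},
\]
so $\mathrm{Law}_{\nu_B}(S) = \mathrm{Law}_\nu(S)$ holds if and only if $\EE_\nu[B \mid S]$ is almost surely constant, i.e.\ the burst size is mean-independent of the total amount of sand. This is the heart of the matter, and it is a genuinely non-obvious cancellation: ordinary avalanche statistics grow with the amount of sand present, so the required independence must come from the specific definition of burst size rather than from size alone.

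I would attack the identity $\EE_\nu[B \mid S] = \EE_\nu[B]$ through the spanning-tree description of recurrent configurations. Under the Dhar bijection the total sand $S$ corresponds to the level (external-activity) statistic, so that $\sum_\eta y^{S(\eta)}$ is, up to normalization, the level polynomial, a one-variable evaluation of the Tutte polynomial; the plan is to show that the burst-weighted generating function $\sum_\eta B(\eta)\, y^{S(\eta)}$ is a constant multiple of this, which is precisely the desired conditional-mean independence. Concretely I would look for a burning-algorithm or Green's-function identity — or a bijection on the recurrent configurations at each fixed level — exhibiting $\sum_{\eta : S(\eta) = k} B(\eta)$ as proportional to the number of recurrent configurations at level $k$. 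The renewal picture supplies the intuition for why such a cancellation should occur: the burst is the increment of total sand accumulated over one renewal cycle, so weighting by $B$ amounts to length-biasing a renewal process whose clock is the total sand itself, and for the clock variable the inspection-paradox bias leaves the stationary marginal unperturbed.

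The main obstacle is this last algebraic identity. Establishing that the burst-weighted level generating function is proportional to the plain one — equivalently, that burst size carries no information about the level beyond its overall mean — is where all the structure of the model is used, and it is the step I expect to require the combinatorics of waves, the two-component spanning-forest representation of avalanche sizes, and the precise definition of burst size inherited from the Markov renewal construction. Everything preceding it (the reduction to total sand, the conditional-expectation criterion, and the passage to the density limit) is formal given the main theorem.
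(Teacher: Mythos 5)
Your reduction breaks at its first step, and the identity you reduce to is false. The total sand in the threshold state is $|s_\tau| = |R_z(s_\tau)| + m_\tau$, where $m_\tau \geq 0$ is the offset in the $z$-recurrent decomposition; by Theorem~\ref{t.main} it is uniform on $\{0,\ldots,\av_{i_\tau\to z}(\rho_\tau)-1\}$ given $(i_\tau,\rho_\tau)$ and is typically nonzero. So the limiting law of $|s_\tau|$ is \emph{not} the law of $S=|\rho|$ under the size-biased measure $\nu_B$: you have dropped the $m_\tau$ term. Worse, the conditional-mean identity $\EE_\nu[B\mid S]=\EE_\nu[B]$ that you propose to establish is refuted by the paper's own example. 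On the complete graph $K_n$ with sink $z$ and $\alpha_i\equiv 1/n$, the maximal recurrent configuration $\rho_{\max}$ is the unique one at its level $|\rho_{\max}|=(n-1)^2$, and $B(\rho_{\max})=\sum_i\alpha_i\,\av_{i\to z}(\rho_{\max})=1/n$, while $\EE_\nu[B]=1$. Hence $\EE_\nu[B\mid S]$ is not constant, $\mathrm{Law}_{\nu_B}(S)\neq\mathrm{Law}_\nu(S)$, and no burning-algorithm or Tutte-polynomial identity will rescue that statement; only the sum $|\rho_\tau|+m_\tau$ has the stationary law.

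The paper's route sidesteps the size-biasing entirely by a choice of sink you have not exploited: decompose $\PP_{s_0}\{|s_\tau|=n\}$ over the value of the epicenter, and on the event $\{i_\tau=z\}$ use the $z$-recurrent decomposition with the sink placed \emph{at the epicenter}. There $\av_{z\to z}\equiv 1$, so the size-biasing is trivial and $m_\tau=0$, whence $|s_\tau|=|R_{i_\tau}(s_\tau)|$ exactly and $R_{i_\tau}(s_\tau)$ is uniform on $\Rec(i_\tau)$ (Corollary~\ref{c.sinkatsource}). This yields $\PP_{s_0}\{|s_\tau|=n\}\to\sum_{z}\frac{\alpha_z}{\kappa}\,\#\{\rho\in\Rec(z):|\rho|=n\}$, and the single combinatorial input is the Perrot--Pham theorem that the level statistic of $\Rec(z)$ on an Eulerian graph does not depend on $z$. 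That is the Tutte-polynomial-flavored fact your proposal correctly senses is needed, but it enters as sink-independence of the level polynomial, not as mean-independence of burst size from level.
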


This conjecture is natural if one suspects the inequality $\zeta_{\tau}(h) \neq \zeta_s$ for finite $h$ arises from insufficient mixing. Starting from a highly subcritical state $h \ll 0$ allows for enough time to mix before reaching the critical threshold, so that the threshold state
can be compared to the stationary state.  We will see that this basic intuition is correct except for a size-biasing adjustment arising from the fact that the threshold is reached at a random time.
 
The abelian sandpile is often proposed as a model for other self-organized critical systems.  The significance of Conjecture~\ref{c.pppr} is that it suggests specifically which slowly driven systems are good candidates for comparison with the stationary state of the abelian sandpile: those which have sufficient time to mix during the driving phase so that they forget their initial subcritical state.

\subsection{Outline}
In this paper we relate not just the densities $\zeta_{\tau}$ and $\zeta_s$ but the actual distributions of the threshold and stationary states.
Our main result is Theorem~\ref{t.main} below. It gives the limiting joint distribution as $h \to -\infty$ of the epicenter (the last vertex at which sand is added, triggering a system-spanning avalanche) and the recurrent representative of the threshold state. The law of the latter turns out to be a size-biasing of the stationary law by \emph{burst size}, an avalanche statistic we define below. The mechanism for size-biasing is a Markov renewal theorem, Proposition~\ref{t.marathon}. We use these results to prove Conjecture~\ref{c.pppr}.


\subsection{Graph Laplacian; Stabilizability; Odometer function}
\label{s.laplacian}

Let $G=(V,E)$ be a finite directed graph, with multiple edges permitted. We assume throughout that $G$ is connected and \emph{Eulerian}, that is, each vertex $i \in V$ has the same number $\deg(i)$ of incoming edges as outgoing edges.  In particular, any undirected graph can be made Eulerian by replacing each edge with a pair of oppositely oriented directed edges.  The \emph{graph Laplacian} $\Delta$ acts on functions $u : V \to \Z$ by
	\begin{equation} \label{e.laplacian} \Delta u(i) = - \deg(i) u(i) + \sum_e u(e^-) \end{equation}
where the sum is over incoming edges $e$ to vertex $i$, and $e^-$ denotes the other endpoint of the edge.

A \emph{sandpile} is a function $s: V \to \Z$. We think of a positive value $s(i)>0$ as a number of sand grains (or ``chips'') at $i$, and negative value as a ``hole'' that can be filled by chips.
Vertex $i$ is \emph{unstable} if $s(i) \geq \deg(i)$, and an unstable vertex \emph{topples} by sending away $\deg(i)$ chips, one along each outgoing edge. Note that toppling $i$ yields the sandpile $s + \Delta \delta_i$, where $\delta_i(j) = \one \{i=j\}$. We say that $s$ is \emph{stabilizable} if starting from $s$ there exists a finite sequence of topplings of unstable vertices resulting in a sandpile $\Stab(s) \leq \deg -1$ (inequalities between functions are coordinatewise).  This $\Stab(s)$ is called the \emph{stabilization} of $s$, and satisfies
	\[ \Stab(s) = s + \Delta u \]
where $u(i)$ is the number of times $i$ topples. This function $u$ is called the \emph{odometer} of $s$. Both $u$ and $\Stab(s)$ depend only on $s$, and not on the choice of toppling sequence. (For the proof of this \emph{abelian property} and other basic facts about sandpiles stated here without proof, see one of the surveys \cite{Dhar06,HLMPPW,Jarai}.)

Denote by \[ |s| = \sum_{i \in V} s(i) \] the total number of chips in $s$, a quantity conserved under toppling. If $|s| > 
|\deg - 1|$ then there will always be a vertex $i$ with at least $\deg(i)$ chips, so $s$ is not stabilizable. However, stabilizability depends not just on the total number of chips but on how they are arranged. For instance, on the complete graph on $n$ vertices $\{0,1,\ldots,n-1\}$ the sandpile $s(i)=i$ is not stabilizable, but the stable sandpile $s \equiv n-2$ has more chips if $n \geq 4$.

\subsection{The closed chain}
\label{s.closed}
For each $i \in V$ we define an \emph{addition operator} $a_i$ which acts on a sandpile $s$ by adding one chip at $i$ and then stabilizing if possible,
	\[ a_i s = \begin{cases} \Stab (s+\delta_i) & \text{if $s+\delta_i$ is stabilizable} \\
					s+\delta_i & \text{else}.
					\end{cases} \]
The \emph{closed chain} (sometimes called the fixed-energy sandpile) is a Markov chain $(s_k)_{k\geq 0}$ on the space $V^\Z$ of sandpiles.
Given an initial state $s_0$ and a probability distribution $\alpha$ on $V$, the states $s_k$ for $k\geq 1$ are defined by 
	\[ s_{k} = a_{i_k} s_{k-1} \]
where $i_1, i_2, \ldots$ are independent random draws from $\alpha$. Thus, at each discrete time step we add a sand grain at a random site and then stabilize if possible.  Usually $\alpha$ is taken to be the uniform distribution, $\alpha_i \equiv 1/\#V$. We will not need to assume this, but we do assume throughout that there is a positive probability of dropping sand at each vertex: $\alpha_i>0$ for all $i \in V$. 

The \emph{threshold} is defined as the random time
	\begin{equation} \label{e.threshold} \tau = \tau(s_0) = \min \{k \geq 0 \,:\, s_k \text{ is not stabilizable}\}. \end{equation}
  We are interested in the distribution of
 \begin{itemize}
\item The \emph{threshold state} $s_{\tau}$.
\item The \emph{epicenter} $i_\tau$, or ``straw that breaks the camel's back.''
  \end{itemize}

\subsection{The open chain}
\label{s.open}
In order to analyze $s_\tau$ and $i_\tau$ we define a second Markov chain $(\rho_k)_{k \geq 0}$ in which stabilization takes place with respect to a fixed sink vertex $z \in V$. Its addition operators $\hat{a}_i$ are defined by
	\[ \hat{a}_i \rho = \Stab_z (\rho+\delta_i). \]
The subscript $z$ means that chips entering $z$ disappear from the system, and that $z$ is forbidden to topple. Because $G$ is connected, every sandpile is stabilizable with respect to $\Stab_z$. Thus, the definition of $\hat{a}_i$ unlike that of $a_i$ does not require a second case. The word \emph{stabilizable} in this paper will always mean ``stabilizable with respect to $\Stab$.''

The \emph{open chain} (often called the BTW sandpile \cite{BTW} or Dhar's abelian sandpile \cite{Dhar90}) with initial state $\rho_0$ is defined for $k\geq 1$ by
	\[ \rho_k = \hat{a}_{i_k} \rho_{k-1} \]
where $i_1, i_2, \ldots$ are independent random draws from $\alpha$ as in the closed chain.  Note that if $i_k=z$ then $\rho_k = \rho_{k-1}$. 
Dhar's burning test \cite{Dhar90} identifies the recurrent states of the open chain on an Eulerian graph. 

\begin{definition}
\label{d.zrec}
\dmoniker{Burning Test}
For $z \in V$, a sandpile $\rho$ is \emph{$z$-recurrent} if $\rho(z) = \deg(z)$ and $\rho(i) \leq \deg(i)-1$ for all $i\in V-\{z\}$ and every site in $V-\{z\}$ topples exactly once during the stabilization of $\rho + \Delta \delta_z$ with respect to $\Stab_z$.
\end{definition}

The convention $\rho(z)=\deg(z)$ plays no role in the dynamics of the open chain, but it will be convenient when comparing to the closed chain.  We denote the set of $z$-recurrent sandpiles by $\Rec(z)$ and its cardinality by $\kappa$.

The $z$-recurrent sandpiles are in bijection with spanning trees of $G$ oriented toward~$z$ \cite{HLMPPW,MD}. For an Eulerian graph $G$, the BEST theorem 
relating spanning trees to Eulerian tours implies that the number of such trees does not depend on $z$ \cite[Corollary 5.6.3]{Stanley}. 
Thus, $\kappa$ does not depend on $z$.

Each operator $\hat{a}_i$ acts as a permutation on $\Rec(z)$ \cite{Dhar90}. It follows that the stationary distribution $\pi$ of the open chain is uniform: $\pi(\rho)=1/\kappa$ for all $\rho \in \Rec(z)$. These observations about sandpiles are an instance of a general mechanism by which group actions arise from finite commutative monoid actions \cite[Lemma A.4]{BL}.

\subsection{The open chain as a factor of the closed chain}
In Lemma~\ref{l.rectest} we will show that any sandpile $s$ has a unique \emph{$z$-recurrent decomposition}
	\begin{equation} \label{e.zrec.intro} s = \rho + m\delta_z + \Delta v \end{equation}
where $\rho \in \Rec(z)$ and $m \in \Z$ and $v(z) = 0$. Moreover, $s$ is stabilizable if and only if $m<0$. 
This last assertion follows very easily from well-known facts about sandpiles, but it is key to our approach. Denoting by $R_z(s)$ the unique $\rho \in \Rec(z)$ satisfying \eqref{e.zrec.intro}, it is straightforward to check (Lemma~\ref{l.intertwining}) that
	\begin{equation} \label{e.intertwining} R_z (a_i s) = \hat{a}_i R_z (s). \end{equation}
Consider now the closed chain $s_k$ and its decomposition
	\begin{equation} \label{e.zrec.closed} s_k = \rho_k + m_k\delta_z + \Delta v_k. \end{equation}
From \eqref{e.intertwining} and $s_k = a_{i_k} s_{k-1}$ we see by induction that $\rho_k = \hat{a}_{i_k} \rho_{k-1}$, so $\rho_k = R_z(s_k)$ follows the law of the open chain.  The open chain is thus a deterministic function of the closed chain.

\subsection{Main theorem}

In \textsection\ref{s.proof} we prove the following.

\begin{theorem}
\label{t.main}
Let $(s_k)_{k \geq 0}$ be the closed chain and \eqref{e.zrec.closed} its $z$-recurrent decomposition. Let $\tau = \tau(s_0)$ be the threshold time \eqref{e.threshold}. 
As $|s_0| \to -\infty$, the joint distribution of $(i_\tau,\rho_\tau,m_\tau)$ converges to
	\begin{equation} \label{e.main} \PP_{s_0} \{ (i_\tau,\rho_\tau,m_\tau) = (i,\rho,m) \} \to \frac{\alpha_i}{\kappa} \one \{0 \leq m \leq |\hat{a}_i^{-1} \rho| - |\rho| \}. \end{equation}
\end{theorem}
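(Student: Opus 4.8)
The plan is to recognize the threshold time $\tau$ as a first-passage time of a non-decreasing additive functional of the open chain, and to read off the limiting law from the Markov renewal theorem, Proposition~\ref{t.marathon}. The engine is a conservation identity. Since toppling preserves the total number of chips and each step of the closed chain adds exactly one chip, $|s_k| = |s_0| + k$. On the other hand $\sum_i \Delta u(i) = 0$ on an Eulerian graph, immediate from \eqref{e.laplacian}, so applying $|\cdot|$ to the decomposition \eqref{e.zrec.closed} gives $m_k = |s_0| + k - |\rho_k|$. Equivalently $m_k - m_{k-1} = b_k$, where
\[ b_k := 1 - \big(|\rho_k| - |\rho_{k-1}|\big) \]
is precisely the number of chips the open-chain transition $\rho_{k-1} \mapsto \rho_k = \hat a_{i_k}\rho_{k-1}$ sends into the sink, hence a non-negative integer (the \emph{burst size}). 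Thus $M_k := m_k - m_0 = \sum_{j=1}^k b_j$ is a non-decreasing, integer-valued additive functional of the ergodic open chain $(\rho_k)$.

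Next I would identify $\tau$ as a first passage. Because $|\rho_k|$ ranges over the finite set $\Rec(z)$ and is therefore bounded, while $m_0 = |s_0| - |\rho_0| \to -\infty$ as $|s_0| \to -\infty$, and since $s_k$ is stabilizable exactly when $m_k < 0$, the threshold $\tau = \min\{k : m_k \geq 0\}$ equals the first passage $\min\{k : M_k \geq L\}$ of $M$ above the level $L = -m_0 \to \infty$. At the crossing step the relevant data decode cleanly: writing $i_\tau = i$ and $\rho_\tau = \rho$, bijectivity of $\hat a_i$ on $\Rec(z)$ gives $\rho_{\tau-1} = \hat a_i^{-1}\rho$ and burst $b_\tau = |\hat a_i^{-1}\rho| + 1 - |\rho|$; and since $m_{\tau-1} < 0 \leq m_\tau = m_{\tau-1} + b_\tau$, the overshoot obeys $0 \leq m_\tau \leq b_\tau - 1 = |\hat a_i^{-1}\rho| - |\rho|$, which is exactly the indicator appearing in \eqref{e.main}.

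It remains to feed this into the Markov renewal theorem, for which I need two stationary inputs. The stationary law of $(\rho_k)$ is uniform, $\pi \equiv 1/\kappa$, as recorded in \textsection\ref{s.closed}. The mean burst is $\bar b = \EE_\pi[b] = 1$: by ergodicity $M_k/k \to \bar b$, while $M_k = k + |\rho_0| - |\rho_k|$ forces $M_k/k \to 1$ since $|\rho_k|$ is bounded. Proposition~\ref{t.marathon} then selects the transition straddling level $L$ with weight proportional to its stationary frequency $\pi(\rho')\alpha_i$ times its burst $b$, and places the overshoot uniformly on $\{0,\dots,b-1\}$ within that jump; the two factors of $b$ cancel, leaving limiting probability $\pi(\rho')\alpha_i\,\one\{0 \le m \le b-1\}/\bar b$. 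Substituting $\rho' = \hat a_i^{-1}\rho$, $\pi \equiv 1/\kappa$, and $\bar b = 1$ yields precisely \eqref{e.main}. As a consistency check, summing the right-hand side over $(i,\rho,m)$ gives, for each fixed $i$, an inner sum over $\rho$ of $b = |\hat a_i^{-1}\rho| + 1 - |\rho|$ that telescopes to $\kappa$ by bijectivity of $\hat a_i$; together with $\sum_i \alpha_i = 1$ the total is $1$, confirming a genuine probability distribution.

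The step I expect to be the main obstacle is the clean application of Proposition~\ref{t.marathon}. The additive component $M_k$ has increment $0$ at most steps, since a typical avalanche does not reach the sink, so I must verify the aperiodicity and non-degeneracy hypotheses that make the overshoot distribution genuinely converge rather than only in a Cesàro sense, and confirm that the lattice span of the burst increments is $1$ (achieved, for instance, by single-chip bursts) so that the overshoot truly ranges over all of $\{0,\dots,b-1\}$. I also need the initial state $\rho_0 = R_z(s_0)$, which depends on $s_0$, to wash out; this is exactly where the divergence $L \to \infty$, hence $\tau \to \infty$, together with mixing of the open chain enters, and it is the reason the limit is independent of the manner in which $|s_0| \to -\infty$.
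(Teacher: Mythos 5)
Your proposal is correct and follows essentially the same route as the paper: identify $\tau$ as the first passage of the cumulative burst process $m_k$ past $0$, apply Proposition~\ref{t.marathon} to the pair chain $(i_k,\rho_k)$ with length equal to burst size, and use that the mean stationary burst is $1$ so the normalizing constant $Z$ disappears (your derivations of the increment identity by global chip conservation and of $Z=1$ by ergodicity are harmless variants of the paper's Lemma~\ref{l.intertwining} and its telescoping permutation argument). The one loose end you flag, aperiodicity of $(P,\ell)$, is settled in the paper by a one-line observation: the self-loop $\hat{a}_z\rho=\rho$ has length $\av_{z\to z}(\rho)=1$, which gives a closed path of length $1$ and hence gcd $1$ in \eqref{e.aperiodic}.
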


The meaning of the limit $|s_0| \to -\infty$ is the following: For any $\eps>0$ there exists $K<0$ such that for any initial configuration $s_0$ satisfying $|s_0| < K$ and any $i \in V,\rho \in \Rec(z), m \in \N$, the left and right sides of \eqref{e.main} differ by at most $\eps$. Theorem~\ref{t.main} expresses a kind of universality, in the sense that the limiting distribution does not depend on the nature of the initial state $s_0$ as long as its total chip count $|s_0|$ tends to~$-\infty$.

The remainder of this section explores some corollaries of Theorem~\ref{t.main}. 

\subsection{Distribution of the epicenter}

Taking $z=i$ in Theorem~\ref{t.main} and noting that in this case $\hat{a}_i^{-1} \rho = \rho$, we obtain the following.

\begin{corollary}
\label{c.sinkatsource}
For each $i \in V$ and $\rho \in \Rec(i)$ we have as $|s_0| \to -\infty$
	\[ \PP_{s_0} \{ i_\tau = i,\, R_{i}(s_\tau) = \rho \} \to \frac{\alpha_i}{\kappa}. \]
\end{corollary}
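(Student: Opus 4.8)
The plan is to derive Corollary~\ref{c.sinkatsource} as an immediate specialization of Theorem~\ref{t.main}, so the work consists entirely of explaining the choice $z=i$ and collapsing the indicator in \eqref{e.main}. First I would fix the sink vertex to coincide with the epicenter being considered, setting $z=i$ in the $z$-recurrent decomposition \eqref{e.zrec.closed}. The crucial observation is that with this choice the addition operator and its inverse act trivially on the relevant recurrent configuration: since $\hat{a}_i$ adds a chip at the sink vertex $i=z$, and chips deposited at the sink simply disappear, we have $\hat{a}_i \rho = \rho$ for every $\rho \in \Rec(i)$, and hence $\hat{a}_i^{-1}\rho = \rho$ as well. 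This is the one content-bearing step, and it is already flagged in the statement preceding the corollary.

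Next I would evaluate the burst-size indicator in \eqref{e.main} under this specialization. Because $\hat{a}_i^{-1}\rho = \rho$, the quantity $|\hat{a}_i^{-1}\rho| - |\rho|$ equals zero, so the indicator $\one\{0 \le m \le |\hat{a}_i^{-1}\rho| - |\rho|\}$ reduces to $\one\{m = 0\}$. Theorem~\ref{t.main} therefore gives that the joint distribution of $(i_\tau,\rho_\tau,m_\tau)$ converges to $\frac{\alpha_i}{\kappa}\,\one\{m=0\}$ when the sink is placed at $i$. In other words, on the event $\{i_\tau = i\}$ the limiting law forces $m_\tau = 0$, and the residual mass on $(i_\tau,\rho_\tau) = (i,\rho)$ is exactly $\alpha_i/\kappa$.

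Finally I would translate this back into the language of the corollary by summing over $m$, or rather by noting that the $m=0$ term is the only surviving one. Recalling that $\rho_\tau = R_i(s_\tau)$ is the $i$-recurrent representative of the threshold state (this is the identification $\rho_k = R_z(s_k)$ established just before the main theorem), the marginal over $m$ of the limiting law yields
\[
\PP_{s_0}\{ i_\tau = i,\, R_i(s_\tau) = \rho \} \to \frac{\alpha_i}{\kappa}
\]
as $|s_0| \to -\infty$, which is precisely the claim. There is essentially no obstacle here: the entire argument is a substitution into Theorem~\ref{t.main} together with the elementary identity $\hat{a}_i\rho = \rho$ at the sink. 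The only point requiring a sentence of justification is that identity, and the only bookkeeping is confirming that the convergence is uniform in $\rho$ (which is inherited directly from the uniform-in-$(i,\rho,m)$ convergence asserted in the interpretation of the $|s_0|\to -\infty$ limit following the main theorem). Thus the corollary follows without any new estimate.
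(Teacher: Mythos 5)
Your proposal is correct and follows exactly the paper's route: specialize Theorem~\ref{t.main} to $z=i$, use $\hat{a}_i\rho=\rho$ (hence $\hat{a}_i^{-1}\rho=\rho$) to collapse the indicator to $\one\{m=0\}$, and read off the marginal $\alpha_i/\kappa$. No gaps.
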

Summing over $\rho$
we obtain 

\begin{corollary}
\label{c.straw}
For each $i \in V$ we have as $|s_0| \to -\infty$
	\[ \PP_{s_0} \{ i_\tau = i \} \to \alpha_i. \]
\end{corollary}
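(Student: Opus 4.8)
The plan is to obtain Corollary~\ref{c.straw} directly from Corollary~\ref{c.sinkatsource} by summing over the recurrent representative, so the argument is a short bookkeeping step sitting on top of the main theorem. First I would note that the $z$-recurrent decomposition of Lemma~\ref{l.rectest}, specialized to $z=i$, assigns to \emph{every} sandpile --- and in particular to the threshold state $s_\tau$ --- a well-defined element $R_i(s_\tau) \in \Rec(i)$. Consequently the events $\{i_\tau = i,\, R_i(s_\tau) = \rho\}$ indexed by $\rho \in \Rec(i)$ are pairwise disjoint and their union is precisely the event $\{i_\tau = i\}$. This yields the exact identity
\[ \PP_{s_0}\{ i_\tau = i \} = \sum_{\rho \in \Rec(i)} \PP_{s_0}\{ i_\tau = i,\, R_i(s_\tau) = \rho \}, \]
valid for every initial condition $s_0$.

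Next I would pass to the limit $|s_0| \to -\infty$. Since $\Rec(i)$ is finite, the uniform convergence supplied by Theorem~\ref{t.main} (in the sense made explicit just after its statement) lets me interchange the limit with this finite sum. By Corollary~\ref{c.sinkatsource} each summand converges to $\alpha_i/\kappa$, so the right-hand side converges to $|\Rec(i)| \cdot \alpha_i / \kappa$. Finally I would invoke the fact, recorded above, that $\kappa = |\Rec(z)|$ is independent of the sink $z$ --- a consequence of the bijection between recurrent states and oriented spanning trees together with the BEST theorem. Hence $|\Rec(i)| = \kappa$, the two factors of $\kappa$ cancel, and the limit equals $\alpha_i$, as claimed.

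There is essentially no obstacle here: all of the analytic content lives in Theorem~\ref{t.main} and its specialization Corollary~\ref{c.sinkatsource}, and the present statement is a one-line consequence. The only two points deserving care are that $R_i(s_\tau)$ is defined on \emph{all} sandpiles, so that $\{i_\tau=i\}$ really splits into exactly $\kappa$ pieces rather than fewer (this is exactly what Lemma~\ref{l.rectest} guarantees), and that the limit in Theorem~\ref{t.main} is uniform over $(i,\rho,m)$, which is what licenses exchanging it with the sum over the finitely many $\rho \in \Rec(i)$.
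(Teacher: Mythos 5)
Your argument is correct and is exactly the paper's route: the paper obtains Corollary~\ref{c.straw} from Corollary~\ref{c.sinkatsource} by summing over $\rho \in \Rec(i)$, using that $\#\Rec(i) = \kappa$ independently of the choice of sink. Your additional remarks on the partition of $\{i_\tau = i\}$ and the uniformity of the limit just make explicit what the paper leaves implicit.
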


In other words, the epicenter $i_\tau$ has the same distribution as the input. We have derived this result for a very particular model, the fixed-energy sandpile, but we would like to suggest it as a general principle: \emph{In a system driven slowly to criticality from a highly subcritical initial state, stress is distributed uniformly} in the sense that
the probability of triggering a system-spanning avalanche by applying additional stress does not depend on where the additional stress is applied.

\subsection{Comparison of densities}
\label{s.densities}

We now give precise definitions of the densities $\zeta_s$ and $\zeta_\tau$ appearing in Conjecture~\ref{c.pppr}.

\begin{definition}
The \emph{stationary density} $\zeta_s$ is the expected number of chips (per site) in a stationary state of the open chain,
	\[ \zeta_s := \frac{1}{\kappa} \sum_{\rho \in \Rec(z)} \frac{|\rho|}{\# V}. \]
\end{definition}

Recall that $|\rho| = \sum_{i \in V} \rho(i)$, that this sum includes the sink $i=z$, and that $\rho(z)=\deg(z)$ by definition. In the case of an undirected graph $G$, Merino's theorem \cite{Merino} implies that $\zeta_s = e+ \frac{\partial}{\partial y} T(x,y) |_{x=y=1}$ where $e$ is the number of (undirected) edges in $G$ and $T$ is the Tutte polynomial of $G$. 

Recently, Perrot and Pham  \cite{PP} have generalized Merino's theorem to Eulerian graphs. They show that if $G$ is Eulerian then for each $n \in \N$, the number of $\rho \in \Rec(z)$ such that $|\rho|=n$ does not depend on $z$. In particular, $\zeta_s$ for an Eulerian graph does not depend on the choice of sink $z$.

\begin{definition}
The \emph{threshold density} $\zeta_{\tau}(s_0)$ of an initial state $s_0$ is the expected number of chips (per site) in the corresponding threshold state $s_\tau$,
	\[ \zeta_{\tau}(s_0) := \EE_{s_0} \frac{|s_\tau|}{\# V}. \]
Here the expectation is taken over the random additions defining the closed chain $(s_k)_{k \geq 0}$.
\end{definition}

In \cite{PPPR} Conjecture~\ref{c.pppr} was posited for the $N \times N$ torus graph, but we will show that it holds on any finite Eulerian graph.
We will also strengthen it in two ways: First, instead of requiring the initial configuration to be a constant $s_0 \equiv h$ tending to $-\infty$, we require only that the total chip count $|s_0|$ tends to $-\infty$. Second, instead of taking expectations we compare the actual random number of chips in the threshold and stationary states. 

\begin{corollary}
\label{c.densities}
For any $n \in \N$ and $z \in V$ we have as $|s_0| \to -\infty$,
	\[ \PP_{s_0}(|s_\tau| = n) \to \frac{1}{\kappa} \sum_{\rho \in \Rec(z)} \one \{ |\rho|=n \}. \]
\old{ 
Fix $n \in \N$. On any Eulerian graph $G$, as $|s_0| \to -\infty$,
	\[ \PP_{s_0}(|s_\tau| = n) \to \sum_{i \in V} \frac{\alpha_i}{\kappa} \sum_{\rho \in \Rec(i)} \one \{ |\rho|=n \}. \]
On any undirected graph $G$, for any $i \in V$ we have as $|s_0| \to -\infty$,
	\[ \PP_{s_0}(|s_\tau| = n) \to \frac{1}{\kappa} \sum_{\rho \in \Rec(i)} \one \{ |\rho|=n \}. \]
}
\end{corollary}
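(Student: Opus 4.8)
The plan is to read off the law of $|s_\tau|$ from the joint limit in Theorem~\ref{t.main}. First I would rewrite $|s_\tau|$ using the $z$-recurrent decomposition \eqref{e.zrec.closed}: applying $|\cdot|=\sum_{j\in V}(\cdot)(j)$ to $s_k=\rho_k+m_k\delta_z+\Delta v_k$ and noting that $|\delta_z|=1$ while $|\Delta v_k|=0$ (the latter because $G$ is Eulerian, so $\sum_{j}\Delta v_k(j)=0$), one gets $|s_k|=|\rho_k|+m_k$ for all $k$, hence $|s_\tau|=|\rho_\tau|+m_\tau$. Thus for fixed $n\in\N$ the event $\{|s_\tau|=n\}$ is the disjoint union, over $i\in V$, $\rho\in\Rec(z)$, and $m\ge0$ with $|\rho|+m=n$, of the events $\{(i_\tau,\rho_\tau,m_\tau)=(i,\rho,m)\}$. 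Since $m=n-|\rho|\ge0$ forces $|\rho|\le n$ and $\Rec(z)$ is finite, only finitely many triples contribute, so the uniform convergence in Theorem~\ref{t.main} may be applied term by term to give
\[ \PP_{s_0}(|s_\tau|=n)\ \to\ \sum_{i\in V}\frac{\alpha_i}{\kappa}\sum_{\rho\in\Rec(z)}\one\{0\le n-|\rho|\le |\hat{a}_i^{-1}\rho|-|\rho|\}. \]

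Next I would evaluate the inner sum for each fixed $i$ by the substitution $\sigma=\hat{a}_i^{-1}\rho$. Since $\hat{a}_i$ permutes $\Rec(z)$, letting $\rho=\hat{a}_i\sigma$ range over $\Rec(z)$ turns the two constraints $0\le n-|\rho|$ and $n-|\rho|\le|\sigma|-|\rho|$ into $|\hat{a}_i\sigma|\le n\le|\sigma|$. Hence the claim reduces to the counting identity
\[ \#\{\sigma\in\Rec(z):|\hat{a}_i\sigma|\le n\le|\sigma|\}=\#\{\sigma\in\Rec(z):|\sigma|=n\},\qquad i\in V. \]
To prove it I would use only that $\hat{a}_i$ is a bijection of $\Rec(z)$ together with the bound $|\hat{a}_i\sigma|\le|\sigma|+1$, which holds because $\hat{a}_i\sigma=\Stab_z(\sigma+\delta_i)$ adds one chip and then can only lose chips to the sink. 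Writing $N_{\ge k}=\#\{\sigma\in\Rec(z):|\sigma|\ge k\}$, the left-hand set is $\{|\sigma|\ge n\}\setminus\{|\sigma|\ge n,\ |\hat{a}_i\sigma|\ge n+1\}$; bijectivity gives $\#\{\sigma:|\hat{a}_i\sigma|\ge n+1\}=N_{\ge n+1}$, and the bound $|\hat{a}_i\sigma|\le|\sigma|+1$ shows $\{|\hat{a}_i\sigma|\ge n+1\}\subseteq\{|\sigma|\ge n\}$, so the subtracted set coincides with $\{|\hat{a}_i\sigma|\ge n+1\}$. Therefore the left-hand side equals $N_{\ge n}-N_{\ge n+1}=\#\{\sigma:|\sigma|=n\}$.

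Finally, the inner sum no longer depends on $i$, so summing against $\alpha_i$ and using $\sum_{i\in V}\alpha_i=1$ yields $\PP_{s_0}(|s_\tau|=n)\to\frac1\kappa\#\{\sigma\in\Rec(z):|\sigma|=n\}$, which is the assertion. I expect the genuine content to be the counting identity: the delicate point is recognizing that the extra degree of freedom $m$ in Theorem~\ref{t.main} exactly fills the integer interval from $|\hat{a}_i\sigma|$ to $|\sigma|$, after which the monotonicity bound $|\hat{a}_i\sigma|\le|\sigma|+1$ makes the telescoping $N_{\ge n}-N_{\ge n+1}$ immediate. Passage to the limit is harmless, since for each fixed $n$ the sum ranges over a finite index set.
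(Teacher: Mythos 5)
Your proof is correct, but it takes a genuinely different route from the paper's. The paper decomposes over the location of the epicenter, writes $\PP_{s_0}\{|s_\tau|=n\}=\sum_{z\in V}\PP_{s_0}\{i_\tau=z,\,|s_\tau|=n\}$, and uses the sink-at-epicenter case (Corollary~\ref{c.sinkatsource}, where $m_\tau^z=0$ so $|s_\tau|=|\rho_\tau^z|$) to get $\sum_z\frac{\alpha_z}{\kappa}\#\{\rho\in\Rec(z):|\rho|=n\}$; it then must invoke the Perrot--Pham generalization of Merino's theorem \cite{PP} to assert that the inner count is independent of $z$ and so collapse the sum. You instead fix a single sink $z$ throughout, marginalize Theorem~\ref{t.main} directly over the finitely many contributing triples, and reduce to the counting identity $\#\{\sigma:|\hat{a}_i\sigma|\le n\le|\sigma|\}=\#\{\sigma:|\sigma|=n\}$, which you prove by telescoping $N_{\ge n}-N_{\ge n+1}$ using only that $\hat{a}_i$ permutes $\Rec(z)$ and that burst size is nonnegative (i.e.\ $|\hat{a}_i\sigma|\le|\sigma|+1$). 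Both the substitution $\sigma=\hat{a}_i^{-1}\rho$ and the inclusion $\{|\hat{a}_i\sigma|\ge n+1\}\subseteq\{|\sigma|\ge n\}$ check out, so the argument is complete. What your approach buys is self-containedness: it does not use \cite{PP} at all. Indeed, since the left-hand side $\PP_{s_0}(|s_\tau|=n)$ makes no reference to $z$, your argument shows as a byproduct that $\#\{\rho\in\Rec(z):|\rho|=n\}$ is independent of $z$ on a connected Eulerian graph --- i.e.\ it re-derives the Perrot--Pham counting statement rather than consuming it. What the paper's route buys is brevity and a conceptual punchline (the size-biasing vanishes exactly when the sink sits at the epicenter), at the cost of an external input.
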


This corollary is proved in \textsection\ref{s.proof}. 
We remark that Theorem~\ref{t.main} and its corollaries are exact results on a finite Eulerian graph of fixed size: The only limit taken is the initial condition $|s_0| \to -\infty$.  

Corollary~\ref{c.densities} (convergence in distribution) implies Conjecture~\ref{c.pppr} (convergence in expectation) because the graph $G$ is fixed and $0 \leq |s_\tau| \leq \# E $.

\subsection{Burst size}
The bound on $m$ on the right side of \eqref{e.main} is best understood as measuring the size of the avalanche caused by adding a chip at $i$ to the recurrent state $\hat{a}_i^{-1} \rho$. Since this quantity will appear often, we make the following definition. 

\begin{definition} 
\label{d.burst}
For $\rho \in \Rec(z)$ and $i\in V$ we define the \emph{burst size}
	\[ \av_{i \to z}(\rho) := |\hat{a}_i^{-1} \rho| - |\rho| + 1. \]
\end{definition}

Equivalently, $\av_{i \to z}(\rho)$ is the number of chips that fall into the sink $z$ during the stabilization of $\hat{a}_i^{-1} \rho + \delta_i$ to $\rho$. The dependence on $z$ is via the operator $\hat{a}_i$. 
Note that $\av_{z \to z}(\rho) = 1$ for all $\rho \in \Rec(z)$ (if a chip is dropped directly into the sink, then no toppling occurs and the burst size is $1$).

Summing the right side of \eqref{e.main} over $i$ and $m$ we obtain the following.
\begin{corollary} For any $z \in V$ and $\rho \in \Rec(z)$, we have as $|s_0| \to -\infty$
\label{c.marginal}
	\begin{equation} \label{e.sizebiased} \PP_{s_0} \{ R_z(s_\tau) = \rho \} \to \frac{1}{\kappa} \sum_{i \in V} \alpha_i \av_{i \to z}(\rho). \end{equation}
\end{corollary}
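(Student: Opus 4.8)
The plan is to obtain \eqref{e.sizebiased} by marginalizing the joint limit of Theorem~\ref{t.main}: summing the limiting law of $(i_\tau,\rho_\tau,m_\tau)$ over its first and third coordinates leaves the law of $\rho_\tau = R_z(s_\tau)$, and the resulting count of admissible values of $m$ is exactly the burst size. The whole argument is a finite summation together with a routine interchange of limit and sum; I do not expect a genuine obstacle.

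First I would partition the event $\{R_z(s_\tau)=\rho\}$ according to the values of the epicenter and the sink coordinate. For each fixed $s_0$,
\[ \PP_{s_0}\{R_z(s_\tau)=\rho\} = \sum_{i\in V}\sum_{m\geq 0}\PP_{s_0}\{(i_\tau,\rho_\tau,m_\tau)=(i,\rho,m)\}, \]
using that $\rho_\tau = R_z(s_\tau)$ by the decomposition \eqref{e.zrec.closed}, and that $m_\tau\in\N$ because $s_\tau$ is not stabilizable at the threshold.

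Next I would let $|s_0|\to-\infty$ and apply Theorem~\ref{t.main} term by term. For fixed $i$ and $\rho$ the indicator $\one\{0\leq m\leq|\hat a_i^{-1}\rho|-|\rho|\}$ on the right of \eqref{e.main} is nonzero for precisely the $|\hat a_i^{-1}\rho|-|\rho|+1=\av_{i\to z}(\rho)$ integers $m\in\{0,1,\ldots,|\hat a_i^{-1}\rho|-|\rho|\}$, by Definition~\ref{d.burst}. Summing \eqref{e.main} over $m$ thus contributes $\frac{\alpha_i}{\kappa}\av_{i\to z}(\rho)$, and summing over the finite set $V$ yields exactly the right-hand side of \eqref{e.sizebiased}.

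The one point requiring care is the interchange of the limit with the double sum, which is harmless because the index set is effectively a fixed finite set. The sum over $i$ is over the finite vertex set $V$, and the coordinate $m_\tau$ is supported, for every $s_0$, on $\{0,1,\ldots,M\}$ with $M=\max_{i,\rho}(|\hat a_i^{-1}\rho|-|\rho|)$ a constant of the graph. Indeed, from \eqref{e.intertwining} together with the chip-count identity $|s_k|=|s_{k-1}|+1$ one checks that $m_k-m_{k-1}=\av_{i_k\to z}(\rho_k)\geq 1$ at each step up to the threshold, so at the first time $m_k\geq 0$ one has $0\leq m_\tau\leq\av_{i_\tau\to z}(\rho_\tau)-1\leq M$. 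Hence the sum has boundedly many terms, the finitely many termwise limits supplied by Theorem~\ref{t.main} may be added, and the total error is controlled by the number of terms times the uniform per-term error. Thus Corollary~\ref{c.marginal} is a direct marginalization of the main theorem.
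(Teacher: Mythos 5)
Your proof is correct and follows essentially the paper's own route: the paper obtains Corollary~\ref{c.marginal} by exactly this marginalization of \eqref{e.main} over $i$ and $m$, with the sum over $m$ of the indicator producing $|\hat a_i^{-1}\rho|-|\rho|+1=\av_{i\to z}(\rho)$, and the interchange of limit and sum is harmless for the reason you give. One immaterial slip: the increments $m_k-m_{k-1}=\av_{i_k\to z}(\rho_k)$ are only $\geq 0$, not $\geq 1$ (burst size can vanish, e.g.\ the paper's $\rho_{\max}$ example on the complete graph), but your bound $0\leq m_\tau\leq \av_{i_\tau\to z}(\rho_\tau)-1$ still holds because $m_{\tau-1}$ is a negative integer, hence $\leq -1$.
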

Thus the distribution of the $z$-recurrent representative of the threshold state is the size-biasing of the uniform distribution $1/\kappa$ by average burst size. It is interesting to compare this result with Corollary~\ref{c.sinkatsource}: if instead of a fixed sink $z$ we place the sink at the (random) epicenter $i_\tau$, then the size-biasing disappears. 

Denoting by $\theta_z(\rho)$ the right side of \eqref{e.sizebiased}, we remark that $\theta_z$ can differ considerably from the uniform distribution on $\Rec(z)$. For example, taking $G$ to be the complete graph on vertices $\{0,1,\ldots,n-1\}$ with $z=n-1$ and $\alpha_i \equiv 1/n$, the maximal recurrent configuration $\rho_{\max} = n-2 +\delta_z$ has $\av_{i\to z}(\rho)=0$ unless $i=z$, so $\theta_z(\rho_{\max}) = \frac{1}{n\kappa}$. By contrast the minimal recurrent configuration $\rho_{\min}(i)=i$ has $\theta_z(\rho_{\min}) = ( \frac{n(n-1)}{2}+1 ) \frac{1}{n\kappa}$.

\subsection{Idea of the proof}
Why does burst size appear in Theorem~\ref{t.main}? The increments of the process $m_k$ of \eqref{e.zrec.closed} are burst sizes: $m_{k} - m_{k-1} = \av_{i_{k} \to z}(\rho_k)$. 
Moreover (as will be proved in Lemma~\ref{l.rectest}) $s_k$ is stabilizable if and only if $m_k < 0$, so the threshold time can be expressed as
	\begin{equation} \label{e.thresholdm} \tau = \min \{ k \,:\, m_k \geq 0 \}. \end{equation}
If $m_0$ is very negative, so that a long time must pass until $m_k \geq 0$, then the process $m_k$ is more likely to cross $0$ during a large jump than a small jump. 
This idea is formalized in the Markov renewal theorem (Proposition~\ref{t.marathon}).

The name ``burst size'' is inspired by Dhar's survey \cite{Dhar06} in which he characterizes self-organized critical systems as those in which ``\emph{the build-up of stress...\ is a slow steady process, but the release of stress occurs sporadically in bursts of various sizes}.'' Earthquakes, forest fires, avalanches, rainfall and financial market crashes are some examples of bursts in such systems.

We are not aware of any systematic study of burst size in the abelian sandpile. More commonly studied measures of avalanche size include the total number of topplings, the volume or diameter of the set of sites that topple, and the time to relax if topplings are carried out in parallel. Avalanches can be decomposed into smaller toppling events called ``waves,'' and there is a kind of duality between waves of positive burst size and waves occurring last in an avalanche; see Table~\ref{table.waves} in \textsection\ref{s.wave}. 

\subsection{Comparison of avalanches}

The indicator on the right side of \eqref{e.main} is a kind of size-biasing. To make this explicit, let us compare the burst size of a stationary avalanche (that is, the number of chips lost to the sink when stabilizing $\eta+\delta_i$ with respect to $\Stab_z$, where $\eta$ is uniformly distributed on $\Rec(z)$) to that of the threshold avalanche (the number of chips lost to the sink when stabilizing $\rho_{\tau-1}+\delta_{i_\tau}$ to $\rho_\tau$). For $b=0,1,2,\ldots$ let $p_b$ denote the stationary probability of an avalanche of burst size~$b$,
	\[ p_b := \sum_{i \in V} \sum_{\eta \in \Rec(z)} \frac{\alpha_i}{\kappa} \one \{ \av_{i \to z} (\eta) = b \}. \]
Since the expected burst size is $1$ in stationarity, $\sum_{b \geq 0} bp_b =1$. 

Let $q_b = q_b(s_0)$ denote the probability that the \emph{threshold} avalanche
has burst size~$b$,
	\[ q_b := \PP_{s_0} \{ \av_{i_\tau \to z} (\rho_{\tau}) = b \}. \]
Then according to Theorem~\ref{t.main}, in the limit $|s_0| \to -\infty$ we have
	\[ q_b \to \sum_{i \in V} \sum_{\rho \in \Rec(z)}  \frac{\alpha_i}{\kappa} \sum_{m \geq 0} \one \{m \leq \av_{i \to z}(\rho)-1 = b-1 \}. \]
The sum over $m$ equals $b \one \{\av_{i \to z}(\rho)=b\}$, so we have shown the following.

\begin{corollary}
\label{c.avalanche}
$q_b \to bp_b$ as $|s_0| \to -\infty$.
\end{corollary}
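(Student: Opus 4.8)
The plan is to obtain Corollary~\ref{c.avalanche} as a direct marginalization of the limiting joint law supplied by Theorem~\ref{t.main}. The first step is to express the threshold probability $q_b$ as a sum of joint probabilities over all triples $(i,\rho,m)$ compatible with the event $\{\av_{i_\tau \to z}(\rho_\tau) = b\}$. Since the burst size $\av_{i_\tau \to z}(\rho_\tau)$ depends only on the pair $(i_\tau,\rho_\tau)$, this reads
\[ q_b = \sum_{i \in V} \sum_{\rho \in \Rec(z)} \sum_{m \geq 0} \PP_{s_0}\{(i_\tau,\rho_\tau,m_\tau)=(i,\rho,m)\}\, \one\{\av_{i \to z}(\rho) = b\}. \]

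The second step is to pass to the limit $|s_0| \to -\infty$ inside this sum. This interchange is legitimate because the graph is finite, so $V$ and $\Rec(z)$ are finite index sets, and the bound $0 \leq m \leq |\hat{a}_i^{-1}\rho| - |\rho|$ appearing in \eqref{e.main} confines $m$ to a finite range; only finitely many summands are nonzero once the constraint $\av_{i \to z}(\rho) = b$ is imposed. By Theorem~\ref{t.main} each summand converges, so I may replace $\PP_{s_0}\{(i_\tau,\rho_\tau,m_\tau)=(i,\rho,m)\}$ by $\frac{\alpha_i}{\kappa}\one\{0 \leq m \leq |\hat{a}_i^{-1}\rho| - |\rho|\}$.

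The third step is the bookkeeping simplification using Definition~\ref{d.burst}. Rewriting $|\hat{a}_i^{-1}\rho| - |\rho| = \av_{i \to z}(\rho) - 1$, the $m$-indicator becomes $\one\{0 \leq m \leq \av_{i \to z}(\rho) - 1\}$; on the event $\av_{i \to z}(\rho) = b$ this admits exactly the $b$ values $m = 0,1,\ldots,b-1$, so the inner sum over $m$ equals $b\,\one\{\av_{i\to z}(\rho) = b\}$. Pulling the factor $b$ out front leaves
\[ q_b \to b \sum_{i \in V}\sum_{\rho \in \Rec(z)} \frac{\alpha_i}{\kappa}\, \one\{\av_{i \to z}(\rho) = b\} = b\,p_b, \]
the last equality being the definition of $p_b$ with the dummy variable $\rho$ renamed $\eta$.

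I expect no serious obstacle: the statement is a clean consequence of Theorem~\ref{t.main}, and the single point meriting a word of care is the limit--sum interchange, which is immediate from the finiteness of the state space together with the fact that the burst-size constraint truncates the range of $m$.
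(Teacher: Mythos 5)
Your proof is correct and follows essentially the same route as the paper: marginalize the limiting joint law of $(i_\tau,\rho_\tau,m_\tau)$ from Theorem~\ref{t.main} over the event $\{\av_{i\to z}(\rho)=b\}$, observe that the sum over $m$ contributes a factor of $b$, and identify the remaining sum with $p_b$. The extra remark justifying the limit--sum interchange via finiteness is a point the paper leaves implicit, but it is the same argument.
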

	
In particular, the expected burst size of the threshold avalanche converges to the second moment of the burst size of a stationary avalanche:
	\[ \EE_{s_0} [\av_{i_\tau \to z} (\rho_{\tau})]
	= \sum_b b q_b 
	\to \sum_b b^2 p_b \]
as $|s_0| \to -\infty$. If the burst size of a stationary avalanche follows a truncated power law $p_b \sim b^{-\beta}$, then burst size of the threshold avalanche follows the heavier-tailed power law $q_b \sim b^{1-\beta}$.

\section{\texorpdfstring{The $z$-recurrent decomposition}{The z-recurrent decomposition}}

In this section we prove existence and uniqueness of the decomposition \eqref{e.zrec.intro} and track how it changes during a single time step of the closed chain. Given $z \in V$ we say that sandpiles $s_1, s_2 : V \to \Z$ are \emph{$z$-equivalent} if $s_1 -s_2 = \Delta v + m \delta_z$ for some $v: V \to \Z$ and $m \in \Z$.  Here $\Delta$ is the graph Laplacian \eqref{e.laplacian} and $\delta_z$ denotes the configuration with a single chip at vertex $z$.

We will need a few well-known facts about sandpiles.

\begin{lemma}
\label{l.basics}
Let $z\in V$ and $s: V \to \Z$.
\begin{enumerate} 
\item[(a)] $s$ is $z$-equivalent to a unique $\rho \in \Rec(z)$.
\item[(b)] 
$s$ is stabilizable if and only if there exists $u: V\to \N$ such that 
	\begin{equation} \label{e.leastaction} s + \Delta u \leq \deg -1. \end{equation}
\item[(c)] If $s$ is stabilizable then $s - \Delta v$ is stabilizable for all $v : V \to \Z$.
\item[(d)] If $s \leq s'$ and $s'$ is stabilizable, then $s$ is stabilizable.
\end{enumerate}
\end{lemma}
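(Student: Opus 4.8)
The four statements are standard ``toolbox'' facts, so the plan is to prove the substantive part, (b), first, read off (c) and (d) as one-line consequences, and treat (a) by a separate counting argument.

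For (b), the forward direction is immediate from the definitions: if $s$ stabilizes, its odometer $u$ records the number of topplings at each vertex, lies in $\N^V$, and satisfies $s + \Delta u = \Stab(s) \le \deg - 1$ (the abelian property being what makes $u$ and $\Stab(s)$ well defined). The content is the converse, the \emph{least action principle}. Given $u \ge 0$ with $s + \Delta u \le \deg - 1$, I would run the dynamics using only \emph{legal} topplings (toppling a vertex only when it is unstable) and prove by induction on the number of steps that the running toppling count $w$ never exceeds $u$ coordinatewise. The inductive step is the crux: if some $i$ is unstable in $s + \Delta w$ yet $w(i) = u(i)$, then, writing $\Delta(u-w)(i) = -\deg(i)\,(u-w)(i) + \sum_e (u-w)(e^-)$ (sum over incoming edges $e$ to $i$) and using $w(i)=u(i)$ together with $u \ge w$, the first term vanishes and the sum is $\ge 0$, so $(s+\Delta u)(i) \ge (s+\Delta w)(i) \ge \deg(i)$, contradicting $s + \Delta u \le \deg - 1$. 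Hence $w(i) < u(i)$, a legal toppling at $i$ keeps $w \le u$, and since $w$ grows by a unit vector each step while staying bounded by $u$, the process terminates at a stable configuration, witnessing stabilizability. This least action step is the one genuine obstacle; the rest is bookkeeping.

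Parts (c) and (d) then fall out of (b). For (d), if $s \le s'$ and (b) furnishes $u \ge 0$ with $s' + \Delta u \le \deg - 1$, then $s + \Delta u \le s' + \Delta u \le \deg - 1$, so (b) gives that $s$ stabilizes. For (c), take $u \ge 0$ with $s + \Delta u \le \deg - 1$; since $\Delta \mathbf{1} = 0$ for an Eulerian graph (each vertex has indegree $\deg(i)$), the function $u' := u + v + N\mathbf{1}$ satisfies $\Delta(u' - v) = \Delta u$, whence $(s - \Delta v) + \Delta u' = s + \Delta u \le \deg - 1$, and choosing $N$ large makes $u' \ge 0$; (b) then yields that $s - \Delta v$ stabilizes.

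Finally (a) I would treat as a counting statement. The $z$-equivalence classes are the cosets of the sublattice $L = \Delta\Z^V + \Z\delta_z \subseteq \Z^V$. First I would check $\#(\Z^V/L) = \kappa$: since $\mathbf{1}^{\top}\Delta = 0$ on an Eulerian graph, the total-chip map identifies $\Z^V/L$ with $H/\Delta\Z^V$, where $H = \{x : \mathbf{1}^{\top} x = 0\}$, and this index is $\det \Delta_z$, the number of spanning trees oriented toward $z$, which is $\kappa$ (the matrix-tree theorem). Second, distinct $z$-recurrent configurations lie in distinct classes: this is exactly the simple transitivity of the sandpile group action on $\Rec(z)$ recorded earlier, the point being that toppling (adding $\Delta v$) and adding at the sink (adding $\delta_z$) leave $\Stab_z$-classes unchanged, so the action descends from $\Z^V/L$. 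Since $\#\Rec(z) = \kappa = \#(\Z^V/L)$, the resulting injection $\Rec(z) \hookrightarrow \Z^V/L$ is a bijection, giving both existence and uniqueness of the recurrent representative. The only delicate point is importing the group-action fact; the remainder is the lattice index computation.
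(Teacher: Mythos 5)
Your write-up is correct, but note that the paper does not actually prove this lemma: it treats all four parts as imported facts, citing Dhar and \cite[Lemmas 2.13 and 2.15]{HLMPPW} for (a) and \cite{FLP,BL} for the least action principle (b), and explicitly remarks that only the ``immediate consequences'' (c) and (d) are used later. So the comparison is between your self-contained arguments and the paper's citations. Your proof of (b) is the standard least-action argument and is complete and correct: the key inductive step (if $w\le u$, $w(i)=u(i)$, and $i$ is unstable in $s+\Delta w$, then $\Delta(u-w)(i)\ge 0$ forces $(s+\Delta u)(i)\ge\deg(i)$, a contradiction) is exactly right, and termination follows since $w$ increases by a unit vector while staying below $u$. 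Your derivations of (d) and (c) from (b) match the paper's intent precisely; the use of $\Delta\one=0$ (Eulerian) and the shift by $N\one$ to restore nonnegativity in (c) is the right fix for $v$ taking negative values. For (a), your coset-counting argument ($\Z^V/L\cong H/\Delta\Z^V$ via the second isomorphism theorem, index $\det\Delta_z=\kappa$ by the matrix-tree theorem, plus an injection $\Rec(z)\hookrightarrow\Z^V/L$) is the standard route and is the one taken in the cited reference, except that HLMPPW establish \emph{surjectivity} of the class map (every class contains a recurrent configuration, by adding many chips and stabilizing) and then count, whereas you establish \emph{injectivity} by appealing to simple transitivity of the sandpile group action. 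That appeal is the one soft spot: simple transitivity is essentially equivalent in depth to (a) itself, and the paper only records the weaker fact that each $\hat a_i$ permutes $\Rec(z)$, so you are importing something not strictly weaker than what you are proving. You flag this honestly, and since the paper itself outsources (a) entirely, this does not constitute a gap in context --- but if you wanted (a) to be genuinely self-contained you should replace the transitivity import with the surjectivity argument and let the count $\#\Rec(z)=\kappa=\#(\Z^V/L)$ upgrade it to a bijection.
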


Item (a) was remarked by Dhar \cite{Dhar90}; for a proof see \cite[Lemmas 2.13 and 2.15]{HLMPPW}.
Regarding item (b), if $s$ is stabilizable then its odometer is the pointwise smallest function $u$ satisfying \eqref{e.leastaction}. This ``least action principle'' was used in \cite{FLP} to bound the growth rates of sandpiles on $\Z^d$. Pegden and Smart \cite{PS} used it to prove existence of the scaling limit of the abelian sandpile on $\Z^d$.  In \cite{BL} the least action principle is proved for a more general class of processes called abelian networks. In this paper we will not need (b) itself but only its immediate consequences (c) and (d).

\begin{lemma}
\label{l.rectest}
\moniker{$z$-Recurrent Decomposition}
Given $s: V \to \Z$ and $z \in V$, there is a unique triple $(\rho,m,v)$ where $\rho \in \Rec(z)$ and $m \in \Z$ and $v : V \to \Z$ with $v(z) = 0$, such that
	\begin{equation} \label{e.zrec} s = \rho + m \delta_z + \Delta v. \end{equation}
Moreover, $s$ is stabilizable 
if and only if $m < 0$.
\end{lemma}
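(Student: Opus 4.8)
The plan is to handle existence, uniqueness, and the stabilizability criterion in turn, using Lemma~\ref{l.basics} throughout together with two facts about the Laplacian of a connected Eulerian graph: it annihilates constants, $\Delta \one = 0$ (here the Eulerian hypothesis enters, since $\Delta\one(i) = -\deg(i) + (\text{in-degree of }i)$), and it preserves total mass, $\sum_i \Delta u(i) = 0$.

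For existence I would invoke Lemma~\ref{l.basics}(a): $s$ is $z$-equivalent to a unique $\rho \in \Rec(z)$, so by the definition of $z$-equivalence there exist $w : V \to \Z$ and $m \in \Z$ with $s = \rho + m\delta_z + \Delta w$. To force $v(z) = 0$ I set $v := w - w(z)\one$; since $\Delta \one = 0$, this changes neither $m$ nor $\Delta w$, yielding \eqref{e.zrec}. For uniqueness, subtracting two such decompositions shows the two recurrent parts are $z$-equivalent, hence equal by Lemma~\ref{l.basics}(a). What survives is $(m'-m)\delta_z + \Delta(v'-v) = 0$; summing over all of $V$ and using $\sum_i \Delta(\cdot)(i) = 0$ annihilates the Laplacian term and forces $m = m'$. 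Then $\Delta(v - v') = 0$, so $v - v'$ is constant (this is where connectedness of $G$ is used), and $v(z) = v'(z) = 0$ gives $v = v'$.

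For the stabilizability criterion the first move is to reduce to $\rho + m\delta_z$: by Lemma~\ref{l.basics}(c) applied in both directions, adding or subtracting $\Delta v$ preserves stabilizability, so $s$ is stabilizable iff $\rho + m\delta_z$ is. If $m < 0$, then $(\rho + m\delta_z)(z) = \deg(z) + m \le \deg(z) - 1$ while $(\rho + m\delta_z)(i) = \rho(i) \le \deg(i) - 1$ for $i \ne z$, so the configuration is already stable, hence stabilizable. If $m \ge 0$, then $\rho + m\delta_z \ge \rho$, so by the monotonicity in Lemma~\ref{l.basics}(d) it suffices to show that $\rho$ itself is not stabilizable.

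The crux, and the step I expect to be the main obstacle, is precisely this last claim: a $z$-recurrent $\rho$ is not stabilizable. The key observation is that the burning-test definition of $\Rec(z)$ says exactly that toppling the sink once and then toppling each non-sink vertex exactly once is a legal toppling sequence, whose net toppling vector is $\one$; since $\Delta \one = 0$, this sequence returns the configuration to $\rho$. As $\rho(z) = \deg(z)$, the sink is again unstable, so the round can be repeated forever, giving an infinite legal toppling sequence. Rephrasing this in terms of the tools already in hand: if $\rho$ were stabilizable with odometer $u \ge 0$ as in Lemma~\ref{l.basics}(b), then because the number of topplings of each vertex in any legal sequence is bounded by its odometer value, the one-round sequence forces $u \ge \one$; but then $u - \one \ge 0$ also satisfies \eqref{e.leastaction}, contradicting the minimality (least action) of the odometer. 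The delicate points to verify are that the non-sink topplings, legal under the sink dynamics $\Stab_z$, remain legal under the full dynamics $\Stab$ (the value at each non-sink vertex is unaffected by whether chips accumulate at or vanish into $z$), and that the Eulerian hypothesis is exactly what makes a full round mass-neutral at $z$ as well, so that the configuration genuinely returns to $\rho$.
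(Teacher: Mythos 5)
Your proposal is correct and follows essentially the same route as the paper: existence and uniqueness via Lemma~\ref{l.basics}(a) together with $\Delta\one=0$, mass conservation, and the one-dimensional kernel of $\Delta$; then reduction to $\rho+m\delta_z$ via Lemma~\ref{l.basics}(c)--(d), with the burning test supplying the legal round of topplings that returns $\rho$ to itself and hence shows $\rho$ is not stabilizable. Your odometer/least-action justification of that last step is a slightly more explicit version of what the paper leaves implicit, but it is the same argument.
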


\begin{proof}
By Lemma~\ref{l.basics}(a) there is a unique $\rho \in \Rec(z)$ that is $z$-equivalent to $s$. Then
	\[ s = \rho + m\delta_z + \Delta v \]
for some $m \in \Z$ and $v: V \to \Z$. By adding a constant to $v$ we can ensure that $v(z) = 0$. 
Next we verify the uniqueness of this decomposition.
Counting chips shows that $m$ is uniquely determined: $m = |s| - |\rho|$ since $|\Delta v | =0$.  Since $G$ is connected Eulerian, the kernel of $\Delta$ is one-dimensional consisting of the constant functions, so $v$ is uniquely determined subject to the condition $v(z) = 0$.

It remains to show that $s$ is stabilizable if and only if $m<0$. By Lemma~\ref{l.basics}(c), $s$ is stabilizable if and only if $\rho+m\delta_z$ is stabilizable. By Lemma~\ref{l.basics}(d) it therefore suffices to show that $\rho-\delta_z$ is stabilizable and that $\rho$ is not stabilizable.  Recalling from Definition~\ref{d.zrec} that $\rho(z) = \deg(z)$ and $\rho(i) < \deg(i)$ for all $i\neq z$ we see that $\rho - \delta_z$ is trivially stabilizable (it has no unstable vertices). On the other hand $\rho$ has one unstable vertex, $z$. By Dhar's burning test, toppling $z$ results in an avalanche in which every other site topples exactly once, yielding $\rho$ again (here we have used that $\Delta \one = 0$ since $G$ is Eulerian). Therefore $\rho$ is not stabilizable.
\end{proof}

The next lemma tracks how the $z$-recurrent decomposition changes when we apply an addition operator $a_i$. Given $s: V \to \Z$ and $i \in V$, let $u$ be the odometer for $s+\delta_i$ with respect to $\Stab$ (or $u\equiv 0$ if $s+\delta_i$ is not stabilizable). Denoting by $R_z(s)$ the unique $\rho \in \Rec(z)$ satisfying \eqref{e.zrec}, let $\hat{u}$ be the odometer for $R_z(s)+\delta_i$ with respect to $\Stab_z$. 

\begin{lemma}
\label{l.intertwining}
Fix $i, z \in V$. Let $s$ be a sandpile with $z$-recurrent decomposition \eqref{e.zrec}.
Then $a_i s$ has the $z$-recurrent decomposition 
\begin{align*} a_i s  = \hat{a}_i \rho + (m+\beta)\delta_z + \Delta (v + u-\hat{u}-u(z)) \end{align*}
where $\beta = \av_{i \to z}(\hat{a}_i \rho)$ is the burst size {\em (Definition~\ref{d.burst})}.
\end{lemma}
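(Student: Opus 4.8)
The plan is to compute $a_i s$ explicitly through its odometer, rewrite the result in terms of the open-chain data attached to $\rho = R_z(s)$, and then invoke the uniqueness half of Lemma~\ref{l.rectest} to conclude. First I would write
\[ a_i s = s + \delta_i + \Delta u, \]
which holds in both cases of the definition of $a_i$: when $s+\delta_i$ is stabilizable this is the defining property of the odometer $u$, and when it is not we have $a_i s = s+\delta_i$ and $u\equiv 0$ by convention, so the identity persists uniformly. Substituting the decomposition $s = \rho + m\delta_z + \Delta v$ yields
\[ a_i s = \rho + \delta_i + m\delta_z + \Delta(v+u). \]

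The crux is to relate $\rho + \delta_i$ to $\hat{a}_i\rho = \Stab_z(\rho+\delta_i)$, which requires bookkeeping for the sink's special role. Since $z$ never topples under $\Stab_z$, we have $\hat{u}(z)=0$, and at every non-sink vertex the open-chain dynamics agrees exactly with the full Laplacian $\Delta\hat{u}$; the sole discrepancy is at $z$, where the convention $(\hat{a}_i\rho)(z)=\deg(z)$ overrides the chips the Laplacian would deposit there. Hence there is an integer $\beta$ with
\[ \hat{a}_i\rho = \rho + \delta_i + \Delta\hat{u} - \beta\delta_z, \]
and $\beta = (\rho+\delta_i)(z) + \Delta\hat{u}(z) - \deg(z)$ counts precisely the chips absorbed by the sink during the stabilization of $\rho+\delta_i$ to $\hat{a}_i\rho$. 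I regard isolating this sink-correction term cleanly as the main obstacle: it is the one place where $\Stab_z$ departs from the Laplacian identity, and getting its sign and support right is exactly what makes the burst size surface.

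Solving the displayed relation for $\rho+\delta_i$ and substituting gives
\[ a_i s = \hat{a}_i\rho + (m+\beta)\delta_z + \Delta(v + u - \hat{u}). \]
To meet the normalization $v'(z)=0$ of \eqref{e.zrec}, I would subtract the constant $u(z)$ inside the Laplacian; since $G$ is Eulerian, $\Delta\one = 0$, so $\Delta(v+u-\hat{u}) = \Delta(v+u-\hat{u}-u(z))$, while $(v+u-\hat{u}-u(z))(z) = 0$ because $v(z)=\hat{u}(z)=0$. To identify $\beta$ with the burst size I would then count chips: applying $|\cdot|$ to the relation for $\hat{a}_i\rho$ and using $|\Delta\hat{u}|=0$ (again from $\Delta\one=0$) gives $\beta = |\rho| - |\hat{a}_i\rho| + 1 = |\hat{a}_i^{-1}(\hat{a}_i\rho)| - |\hat{a}_i\rho| + 1 = \av_{i\to z}(\hat{a}_i\rho)$. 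Finally, since $\hat{a}_i\rho\in\Rec(z)$ (because $\hat{a}_i$ permutes $\Rec(z)$) and the Laplacian argument vanishes at $z$, the expression just derived is a bona fide $z$-recurrent decomposition of $a_i s$, so uniqueness in Lemma~\ref{l.rectest} forces it to equal the decomposition and completes the proof.
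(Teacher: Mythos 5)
Your proposal is correct and follows essentially the same route as the paper: write $a_i s = s+\delta_i+\Delta u$ and $\hat{a}_i\rho = \rho+\delta_i+\Delta\hat{u}-\beta\delta_z$, substitute, shift by the constant $u(z)$ to normalize at $z$, and identify $\beta$ with the burst size (the paper does this by its definition of burst size as chips lost to the sink, you by the equivalent chip count $|\Delta\hat u|=0$). Your extra care about the non-stabilizable case of $a_i$ and the explicit appeal to uniqueness in Lemma~\ref{l.rectest} are fine elaborations of steps the paper leaves implicit.
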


In particular, it follows that $R_z$ intertwines the open and closed addition operators as claimed in \eqref{e.intertwining}: $R_z(a_i s) = \hat{a}_i \rho = \hat{a}_i R_z(s)$. 

\begin{proof}[Proof of Lemma~\ref{l.intertwining}]
By the definition of $a_i$ we have
	\begin{align}  a_i s &= s + \delta_i + \Delta u. \label{e.closedodom} \end{align}
By the definition of $\hat{a}_i$, since $\beta$ chips fall into the sink $z$ when we stabilize $\rho + \delta_i$ with respect to $\Stab_z$, we have
	\begin{align}  \hat{a}_i \rho &= \rho  + \delta_i + \Delta \hat{u} - \beta \delta_z \label{e.openodom} 
			\end{align}
where the last term removes the extra chips at the sink in order to restore the condition $\hat{a}_i \rho(z) = \deg(z)$ (recall that $\rho$ and $\hat{a}_i \rho$ belong to $\Rec(z)$, Definition~\ref{d.zrec}).

Substituting equation \eqref{e.zrec} into \eqref{e.closedodom} and then using \eqref{e.openodom}, we find
\begin{align*} a_i s         &= \rho + \delta_i + m\delta_z + \Delta (v+u) \\
				      &= \hat{a}_i \rho + (m+\beta)\delta_z + \Delta (v+u-\hat{u}-c). \end{align*}
The last equality holds for any constant $c$ since $\Delta c = 0$. Taking $c=u(z)$ so that the function $u-\hat{u}+v-c$ vanishes at $z$, the last line is the $z$-recurrent decomposition of $a_i s$. 
\end{proof}

\section{A Markov renewal theorem}

We will ultimately apply the Markov renewal theorem of this section to a variant of the open chain $\rho_k$ of \textsection\ref{s.open}, but we state it here in more generality.

Let $(X_k)_{k \geq 0}$ be an irreducible Markov chain with a finite state space.
Write $P(\cdot,\cdot)$ for the transition matrix and $\pi(\cdot)$ for the stationary distribution of $X_k$. Let $\mathcal{E} = \{(x,y) \,:\, P(x,y)>0\}$.
Suppose that each edge $(x,y) \in \mathcal{E}$ has an associated ``length'' $\ell(x,y)$, which is a nonnegative integer.  We require that the pair $(P,\ell)$ is aperiodic in the sense that
	\begin{equation} \label{e.aperiodic} \gcd \left \{ \sum_{j=1}^{k} \ell(x_{j-1},x_{j}) \,:\, k \geq 1, \, x_0=x_k,\, (x_{j-1},x_{j}) \in \mathcal{E} \; \forall j \right \} = 1. \end{equation}
This implies that for all sufficiently large $L \in \N$ there is a closed path with edges in $\mathcal{E}$ of total length $L$.
%

Write $\PP_{x_0}$ for the law of $(X_k)_{k \geq 0}$ conditioned on $X_0 = x_0$. Let
	\[ \lambda_k = \sum_{i=1}^k \ell(X_{i-1},X_i) \]
be the total length of the edges traversed up to time $k$. For fixed $n \in \N$ consider the random time
	\[ \tau = \tau_n = \min \{k \,:\, \lambda_k \geq n \}. \]

\begin{prop}
\label{t.marathon}
\moniker{Markov Renewal Theorem}
Suppose that $(X_k)_{k \geq 0}$ is an irreducible Markov chain with length function $\ell$ satisfying \eqref{e.aperiodic}.
For any states $x_0,x,y$ and any $m \in \N$,
	\[ \PP_{x_0} \large\{ (X_{\tau_n-1},X_{\tau_n},\lambda_{\tau_n}-n)= (x,y,m) \large\} \to \frac{1}{Z} \pi(x)P(x,y)\one \{0 \leq m \leq \ell(x,y)-1\} \]
as $n \to \infty$, where the normalizing constant $Z$ equals $\sum_{(x,y) \in \mathcal{E}} \pi(x) P(x,y) \ell(x,y)$.
\end{prop}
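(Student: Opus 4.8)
The plan is to reduce the statement to the convergence to equilibrium of a finite, irreducible, aperiodic Markov chain, namely the \emph{forward recurrence time chain} read off along the integer line. Picture each traversal of an edge $(X_{k-1},X_k)$ as covering the integer positions $\lambda_{k-1}+1,\dots,\lambda_k$; an edge of length $\ell$ covers exactly $\ell$ positions and a length-$0$ edge covers none. For a position $t\ge 1$, let $V_t=(e_t,r_t)$ record the edge $e_t=(X_{\tau_t-1},X_{\tau_t})$ covering $t$ together with the overshoot $r_t=\lambda_{\tau_t}-t$. By definition $\lambda_{\tau_t-1}<t\le\lambda_{\tau_t}$, so the covering edge always has positive length and $0\le r_t\le \ell(e_t)-1$; moreover $V_n=(X_{\tau_n-1},X_{\tau_n},\lambda_{\tau_n}-n)$ is exactly the triple in the statement. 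Thus it suffices to prove that the law of $V_n$ converges, as $n\to\infty$, to $\mu(e,r):=\tfrac1Z\pi(x)P(x,y)$ for $e=(x,y)$ and $0\le r\le\ell(e)-1$.

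First I would check that $(V_t)_{t\ge1}$ is a time-homogeneous Markov chain on the finite state space $\{(e,r):\ell(e)\ge1,\ 0\le r\le\ell(e)-1\}$. Its transitions are transparent: from $(e,r)$ with $r\ge1$ it moves deterministically to $(e,r-1)$ (we remain inside the same edge), while from $(e,0)$ with $e=(x,y)$ it jumps to $(e',\ell(e')-1)$, where $e'$ is the first positive-length edge traversed by the underlying chain started from $y$. The jump law depends on the current state only through the endpoint $y$, so $V_t$ is Markov; the jump distribution sums to one because irreducibility together with the existence of a positive-length edge (guaranteed by \eqref{e.aperiodic}) ensures that a positive-length edge is eventually traversed almost surely. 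Irreducibility of $V_t$ is then immediate: decrement to residual $0$, jump, and use irreducibility of $(X_k)$ to steer the underlying walk onto any prescribed edge $e'$.

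The crux is aperiodicity, and this is exactly where \eqref{e.aperiodic} enters. Fix $e^\ast=(x^\ast,y^\ast)$ and the state $s^\ast=(e^\ast,\ell(e^\ast)-1)$, at which a fresh traversal of $e^\ast$ has just begun covering. A return of $V_t$ to $s^\ast$ after $L$ steps corresponds to a closed path of the underlying chain based at $x^\ast$, beginning with $e^\ast$, whose edge-lengths sum to $L$. Using the consequence of \eqref{e.aperiodic} quoted in the text (closed paths of every sufficiently large total length exist) together with irreducibility to splice such a loop into a circuit through $x^\ast$ that opens with $e^\ast$, I obtain returns to $s^\ast$ at all sufficiently large times; hence the period of $V_t$ is $1$.

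It remains to identify the stationary distribution and invoke the convergence theorem for finite aperiodic irreducible chains, which yields $\PP_{x_0}(V_n=s)\to\mu(s)$ for every initial $x_0$ (the law of $V_1$ depends on $x_0$, but convergence to equilibrium is insensitive to the starting law). To pin down $\mu$ I would avoid the balance equations and instead use the ergodic theorem: the stationary mass of $(e,r)$ equals the long-run fraction of positions carrying that state. Over the first $K$ steps of $(X_k)$ the edge $e=(x,y)$ is traversed $\sim K\pi(x)P(x,y)$ times, each traversal contributing exactly one position of each residual $0,\dots,\ell(e)-1$, while the total number of covered positions is $\lambda_K\sim K\sum_{(x,y)}\pi(x)P(x,y)\ell(x,y)=KZ$. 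Dividing gives the frequency $\tfrac1Z\pi(x)P(x,y)$, so $\mu(e,r)=\tfrac1Z\pi(x)P(x,y)\,\mathbf 1\{0\le r\le\ell(e)-1\}$, which is the claimed limit. The main obstacle is the aperiodicity step above; the bookkeeping around length-$0$ edges (which contribute no positions and no states, consistently with the indicator vanishing when $\ell(x,y)=0$) is routine but must be handled with care.
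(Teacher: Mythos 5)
Your proof is correct and follows essentially the same route as the paper: both pass to the auxiliary chain that advances one step per unit of length (your $V_t$ is the paper's $\xi_n$ up to reflecting the residual coordinate) and conclude by the convergence theorem for finite irreducible aperiodic chains, with aperiodicity supplied by \eqref{e.aperiodic}. The only cosmetic difference is that you identify the stationary law of this chain by a long-run frequency count, whereas the paper uses the ``chain watched only on $A$'' lemma --- itself a consequence of the same ergodic theorem.
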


A helpful metaphor for the triple $(X_{\tau-1},X_\tau,\lambda_\tau-n)$ is the finish line of a marathon of length $n$. (Unlike a real marathon, this one has a random route following a Markov chain!)
According to Proposition~\ref{t.marathon}, in the limit as $n \to \infty$ the distribution of the finish line does not depend on the starting point $x_0$. The distribution of the finishing edge (of a very long marathon) is the size-biasing by $\ell$ of the edge stationary distribution $\pi(x) P(x,y)$; and conditioned on the finish line being on a given edge $(x,y)$ it is uniformly distributed along that edge. 

In our application to sandpiles, it will be important to allow some edge lengths to be zero. Of course, if $\ell(x,y)=0$ then $\PP_{x_0} \{ (X_{\tau-1}, X_\tau) = (x,y) \}=0$.

For a much more general Markov renewal theorem, see Kesten \cite{Kesten}. Below we include a proof of Proposition~\ref{t.marathon} for the sake of completeness.  To motivate the proof, consider first a special case: Setting $\ell(x,y) = \ell(y,x) = 1$ and all other edge lengths to zero, Proposition~\ref{t.marathon} reduces to the following.

\begin{corollary}
\label{c.crossings}
Let $X_k$ be an irreducible and aperiodic Markov chain with transition matrix $P(\cdot,\cdot)$ and stationary distribution $\pi(\cdot)$.
Let $\tau_0 = 0$ and $\tau_n = \min \{ k> \tau_{n-1} \,:\, \{X_{k-1},X_k\} = \{x,y\} \}$ be the $n$-th crossing of edge $\{x,y\}$, counting crossings in both directions. Then as $n \to \infty$,
	\[ \PP_{x_0}(X_{\tau_n} = y) \to \frac{\pi(x)P(x,y)}{\pi(x)P(x,y) + \pi(y)P(y,x)}. \]
\end{corollary}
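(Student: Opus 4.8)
The plan is to observe that the endpoints of successive crossings themselves form a Markov chain on the two-point set $\{x,y\}$, to identify that chain's stationary law by an ergodic-average computation, and then to invoke the elementary two-state convergence theorem.

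First I would set $Y_n := X_{\tau_n}$ for $n \geq 1$. By the strong Markov property $(Y_n)_{n \geq 1}$ is a Markov chain on $\{x,y\}$: each crossing deposits the chain at $x$ or $y$, and conditioned on that landing site the law of the future trajectory --- hence of the next crossing's endpoint --- depends on the site alone. Write $Q$ for its transition matrix. Since a crossing ends at $y$ exactly when it is an $x\to y$ step, $\PP_{x_0}(X_{\tau_n}=y)=\PP_{x_0}(Y_n=y)$, so it suffices to analyze $Y$. If $P(x,y)=0$ or $P(y,x)=0$ the claim is immediate (every crossing ends at the same vertex, matching the right-hand value $0$ or $1$), so assume both are positive; then $Q(x,y)\geq P(x,y)>0$ and $Q(y,x)\geq P(y,x)>0$, so $Y$ is irreducible and crossings occur infinitely often, whence $\tau_n\to\infty$ almost surely.

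Next I would pin down the stationary mass $\nu(y)$ of $Y$. For an irreducible finite chain, $\nu(y)$ equals the almost-sure Cesàro frequency $\frac1n\sum_{m=1}^n \one\{Y_m=y\}$, which is the long-run fraction of crossings ending at $y$. Applying the ergodic theorem to the original chain $X$ with the functions $\one\{(X_{k-1},X_k)=(x,y)\}$ and $\one\{\{X_{k-1},X_k\}=\{x,y\}\}$, dividing, and evaluating along the diverging times $\tau_n$, this fraction converges to $\pi(x)P(x,y)/(\pi(x)P(x,y)+\pi(y)P(y,x))$. This identifies $\nu(y)$ with the target value, independently of $x_0$.

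Finally I would promote the Cesàro statement to honest convergence of $\PP_{x_0}(Y_n=y)$, which requires $Y$ to be aperiodic, and this is the one delicate point. Aperiodicity of $X$ by itself is not enough: one can build an irreducible aperiodic $X$ in which every visit to $y$ consumes exactly two crossings, forcing $Y$ to alternate deterministically so that $\PP_{x_0}(Y_n=y)$ oscillates. The correct hypothesis is the specialization of \eqref{e.aperiodic} to $\ell(x,y)=\ell(y,x)=1$: there is a closed path crossing the edge $\{x,y\}$ an odd number of times. Since the period of the irreducible two-state chain $Y$ equals the $\gcd$ of the crossing counts over closed $X$-paths, this hypothesis is exactly aperiodicity of $Y$ (equivalently $Q(x,x)>0$ or $Q(y,y)>0$), and the two-state convergence theorem then yields $\PP_{x_0}(Y_n=y)\to\nu(y)$. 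The main obstacle is thus getting this aperiodicity condition right and confirming that it governs the period of $Y$ --- precisely the issue that the full Markov renewal theorem must address through condition \eqref{e.aperiodic}.
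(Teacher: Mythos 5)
Your proof is correct and takes essentially the same route as the paper: the paper forms the edge chain $Y_n=(X_n,X_{n+1})$ watched on $\{(x,y),(y,x)\}$ (equivalent to your two-state chain $X_{\tau_n}$), identifies its stationary law via the watched-chain lemma (itself a consequence of the ergodic theorem), and invokes the convergence theorem. Your closing remark is well taken and worth keeping: aperiodicity of $X$ alone does not force aperiodicity of the watched two-state chain (one can arrange that crossings alternate deterministically), and the hypothesis really needed is the specialization of \eqref{e.aperiodic} to $\ell(x,y)=\ell(y,x)=1$, which is exactly what the general Proposition~\ref{t.marathon} assumes.
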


Next we recall two basic facts about discrete time Markov chains with a finite state space, Lemmas~\ref{l.convergence} and~\ref{l.watched}. These will immediately imply Corollary~\ref{c.crossings}, and with a little more effort the full Proposition~\ref{t.marathon}.

\begin{lemma}
\label{l.convergence}
\moniker{Convergence Theorem}
If $(Y_n)_{n \geq 0}$ is an irreducible and aperiodic Markov chain with stationary distribution $\tilde \pi$, then for any $y_0$ and $y$, 
	\[ \PP_{y_0} (Y_n = y) \to \tilde \pi(y) \]
as $n \to \infty$.
\end{lemma}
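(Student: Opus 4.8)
The plan is to prove this classical fact by \emph{coupling}, which keeps the argument elementary and self-contained. Let $S$ denote the (finite) state space. Alongside the given chain $(Y_n)$ started from $y_0$, I run an independent copy $(Y'_n)$ with the same transition matrix but started from the stationary distribution $\tilde\pi$, so that $Y'_n \sim \tilde\pi$ for every $n$. The pair $W_n := (Y_n, Y'_n)$ is then a Markov chain on $S \times S$ with transition probabilities $P((a,b),(a',b')) = P(a,a') P(b,b')$.

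The crucial structural input is that $W_n$ is itself irreducible, and this is the one place where aperiodicity is genuinely used. I would first establish the \emph{primitivity} fact that, because $(Y_n)$ is irreducible and aperiodic, there is an integer $N$ with $P^N(a,b) > 0$ simultaneously for all $a,b \in S$. For a fixed state the set of return times is closed under addition and has gcd $1$ by aperiodicity, so it contains every sufficiently large integer; irreducibility then connects distinct states, and finiteness of $S$ lets one select a single $N$. Given this, $W_n$ can pass from any $(a,b)$ to any $(a',b')$ in $N$ steps with positive probability, so $W_n$ is irreducible on the finite set $S \times S$, hence recurrent.

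Consequently the diagonal $\{(s,s) : s \in S\}$ is reached almost surely, so the coupling time $T := \min\{n \geq 0 : Y_n = Y'_n\}$ satisfies $T < \infty$ almost surely. I then redefine the two chains so that they move together after time $T$; this is legitimate by the strong Markov property, since at the instant they meet the two coordinates have identical conditional law, and it alters neither marginal. The coupling inequality now gives, for every $y$,
	\[ \left| \PP_{y_0}(Y_n = y) - \tilde\pi(y) \right| = \left| \PP(Y_n = y) - \PP(Y'_n = y) \right| \leq \PP(T > n), \]
because after merging the event $\{Y_n \neq Y'_n\}$ forces $\{T > n\}$. Since $T < \infty$ almost surely, $\PP(T > n) \to 0$ as $n \to \infty$, which is exactly the claimed convergence.

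I expect the primitivity fact to be the main obstacle, not because it is deep but because it is the sole step where aperiodicity is essential, and it rests on the number-theoretic observation that a set of positive integers closed under addition with gcd $1$ omits only finitely many integers. Everything else — recurrence of a finite irreducible chain and the coupling inequality via the strong Markov property — is routine. An alternative route would invoke the Perron--Frobenius theorem for the primitive matrix $P$ and extract convergence from its spectral gap, but the coupling argument is cleaner and avoids any linear-algebraic machinery.
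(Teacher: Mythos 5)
Your coupling argument is correct and complete: the independent product chain, the use of aperiodicity (via primitivity of $P$) to make that product chain irreducible, recurrence of a finite irreducible chain to force a finite coupling time, and the coupling inequality are exactly the standard Doeblin proof, and each step is justified soundly. The one thing to note is that the paper does not prove this lemma at all --- it is explicitly recalled as one of "two basic facts about discrete time Markov chains with a finite state space" and used as a black box (the real work in that section goes into Lemma~\ref{l.watched} and the construction of the auxiliary chain $\xi$). So there is no proof in the paper to compare against; your write-up supplies a self-contained justification of a fact the author takes as known, and the coupling route is as good a choice as any. If you wanted to match the paper's level of economy you could simply cite a standard reference (e.g.\ the Aldous--Fill notes the paper already cites), but nothing in your argument is wrong or missing.
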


If $Y$ is a Markov chain and $A$ is a subset of its state space, then the \emph{chain watched only on $A$} is given by
	\[ (Y|_A)_n =  Y_{a_n} \]
where $0 \leq a_1 < a_2 < \cdots$ are the times for which $Y_{a_n} \in A$.  The following identity is a consequence of the ergodic theorem for discrete time Markov chains \cite[ch.\ 2 eq.\ (27)]{AFbook}. 
\begin{lemma} 
\label{l.watched}
\moniker{Chain Watched Only On $A$}
If $Y$ is an irreducible Markov chain with stationary distribution $\tilde \pi$, then the stationary distribution $\pi_A$ of $Y|_A$ is
	\[ \pi_A(x) = \frac{\tilde \pi(x)}{\tilde \pi(A)}. \]
\end{lemma}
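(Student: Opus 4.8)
The plan is to identify $\pi_A$ through the ergodic (time-average) theorem for finite Markov chains, by comparing the occupation measure of $Y$ with that of the watched chain $Y|_A$. First I would check that $Y|_A$ is itself a Markov chain on the finite state space $A$: since $Y$ is irreducible on a finite space it is recurrent, so it visits $A$ infinitely often, the entrance times $0 \le a_1 < a_2 < \cdots$ are all finite, and by the strong Markov property applied at each $a_n$ the sequence $(Y_{a_n})_{n \ge 1}$ is Markov with transition kernel $P_A(x,y) = \PP_x\{Y_\sigma = y\}$, where $\sigma = \min\{k \ge 1 : Y_k \in A\}$. Moreover $Y|_A$ is irreducible on $A$ (any two states of $A$ communicate for $Y$, hence for the watched chain), so it has a unique stationary distribution $\pi_A$, which by the ergodic theorem is the almost sure limit of its empirical occupation measure. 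Note that aperiodicity is not needed anywhere here, consistent with the hypotheses of the lemma.

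Next I would run the two ergodic averages against the same clock. Fix $x \in A$ and write $N(T) = \sum_{k=0}^{T-1} \one\{Y_k \in A\}$ for the number of visits to $A$ before time $T$. The watched times up to $T$ are exactly the times $k < T$ with $Y_k \in A$, so the visits of $Y$ to $x$ and the visits of $Y|_A$ to $x$ coincide, giving
\[ \frac{1}{N(T)} \sum_{n=1}^{N(T)} \one\{Y_{a_n} = x\} = \frac{\sum_{k=0}^{T-1} \one\{Y_k = x\}}{\sum_{k=0}^{T-1} \one\{Y_k \in A\}} = \frac{\frac1T \sum_{k=0}^{T-1} \one\{Y_k = x\}}{\frac1T \sum_{k=0}^{T-1} \one\{Y_k \in A\}}. \]
By the ergodic theorem for $Y$ the numerator of the last fraction tends almost surely to $\tilde\pi(x)$ and the denominator to $\tilde\pi(A) = \sum_{y \in A} \tilde\pi(y)$. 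On the other hand, since $N(T) \to \infty$, the left-hand side is the empirical average of $Y|_A$ evaluated along the random subsequence $M = N(T)$, and hence converges to $\pi_A(x)$ by the ergodic theorem for $Y|_A$. Equating the two limits yields $\pi_A(x) = \tilde\pi(x)/\tilde\pi(A)$, as claimed.

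I expect the only delicate point to be the change of clock between the two occupation measures — aligning the watched-chain index with the original time $T$ via $N(T)$ and passing to the limit along this random subsequence. This is harmless here: irreducibility on a finite space forces $\tilde\pi$ to have full support, so $\tilde\pi(A) > 0$ and $N(T)/T \to \tilde\pi(A) > 0$ guarantees $N(T) \to \infty$, while the ergodic theorem supplies the almost sure convergence of both empirical measures, licensing the limit along $N(T)$. An alternative that avoids limits is to verify directly that $\tilde\pi|_A$ is $P_A$-stationary, i.e.\ $\sum_{x \in A} \tilde\pi(x) P_A(x,y) = \tilde\pi(y)$ for every $y \in A$, by decomposing a stationary trajectory into excursions away from $A$; but I would prefer the ergodic-theorem route above, since it is precisely the form in which the identity is quoted from \cite[ch.\ 2 eq.\ (27)]{AFbook}.
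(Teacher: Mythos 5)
Your proof is correct and takes essentially the paper's approach: the paper states Lemma~\ref{l.watched} without proof, quoting it as a consequence of the ergodic theorem from \cite{AFbook}, and your occupation-measure argument (comparing the empirical measures of $Y$ and $Y|_A$ along the clock $N(T)$) is precisely the standard derivation behind that citation. The one delicate point, passing to the limit along the random times $N(T) \to \infty$, you handle correctly, since almost sure convergence of the watched chain's empirical measure persists along any subsequence and $\tilde\pi(A) > 0$ forces $N(T) \to \infty$.
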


Taking $Y_n = (X_n,X_{n+1})$ and $A = \{(x,y),(y,x)\}$, the convergence theorem applied to the chain $Y|_A$ proves Corollary~\ref{c.crossings}.

To adapt this argument to prove the full Proposition~\ref{t.marathon}, observe that the convergence theorem applies to a \emph{deterministic} time $n$ whereas the renewal theorem involves the random time $\tau_n$. Our strategy will be to define a chain $\xi$ whose state at time $n$ tracks the behavior of our original chain $X$ at time $\tau_n$. We then use Lemma~\ref{l.watched} to compute the stationary distribution of $\xi$ and appeal to the convergence theorem for $\xi$.

\begin{proof}[Proof of Proposition~\ref{t.marathon}]
Define a discrete time Markov chain $Y$ on state space $\mathcal{Y} = \{(x,y,j) \,:\, (x,y) \in \mathcal{E}, \, j \in \N, \, 0 \leq j \leq \ell(x,y)\}$ with initial state $Y_0 = (X_0,X_1,0)$ and transition probabilities given by
	\begin{align*} &\tilde P((x,y,j),(x,y,j+1)) = 1,  && j=0,1,\ldots,\ell(x,y)-1 \\
	  &\tilde P((x,y,j),(y,z,0)) = P(y,z),  && j = \ell(x,y). \end{align*}
Irreducibility of $X$ implies irreducibility of $Y$.

Let us compute the stationary distribution $\tilde \pi$ of $Y$. Noting that $\tilde \pi (x,y,j) = \tilde \pi (x,y,j-1)$ for all $j=1,\ldots,\ell(x,y)$, it suffices to compute $\tilde{\pi}(x,y,0)$. The chain $Y$ watched only on $\mathcal{E} \times \{0\}$ has the same law as the edge chain $((X_k,X_{k+1},0))_{k \geq 0}$, which has stationary distribution $\pi(x)P(x,y)$.
Writing $Z_0 = 1/\tilde{\pi}(\mathcal{E} \times \{0\})$ we obtain $\pi(x)P(x,y) = Z_0 \tilde \pi (x,y,0)$ by Lemma~\ref{l.watched}, hence
	\[ \tilde \pi (x,y,j) = \tilde \pi (x,y,0) = \frac{1}{Z_0} \pi(x)P(x,y). \]

Now consider the chain $\xi = Y |_{\mathcal{Y} - \mathcal{E} \times \{0\}}$. One time step in $\xi$ corresponds to one unit of \emph{length} traveled by the original chain $X$, so 
	$\xi_n = (X_{\tau-1},X_\tau,n-\lambda_{\tau-1})$. 
Note that for any state $x_0$ of $X$,
	\begin{equation} \label{e.minusuniform} \PP_{x_0} \{(X_\tau-1, X_\tau, \lambda_\tau-n) = (x,y,m) \} = \PP_{x_0} \{ \xi_n = (x,y,\ell(x,y)-m) \} \end{equation}
since $\lambda_\tau - \lambda_{\tau-1} = \ell(x,y)$ on the event $\{(X_{\tau-1},X_\tau) = (x,y)\}$.

 By Lemma~\ref{l.watched}, the stationary distribution of $\xi$ is
	\[ \pi_\xi (x,y,j) = \frac{\tilde \pi(x,y,j)}{\tilde \pi(\mathcal{Y} - \mathcal{E} \times \{0\})} 
	=  \frac{1}{Z} \pi(x)P(x,y), \qquad j=1,\ldots,\ell(x,y). \]
Moreover, $\xi$ is irreducible (since $X$ is) and aperiodic by \eqref{e.aperiodic}. The result now follows from \eqref{e.minusuniform} by applying the convergence theorem, Lemma~\ref{l.convergence}, to $\xi$.
\end{proof}

\section{\texorpdfstring{Proof of Theorem~\ref{t.main} and Conjecture~\ref{c.pppr}}{Proof of Theorem 2 and Conjecture 1}}
\label{s.proof}

Consider the Markov chain $X_k = (i_k,\rho_k)$ where $\rho_k$ is the open chain (\textsection\ref{s.open}). Recalling our assumption that $\alpha_i>0$ for all $i$, the chain $X$ is irreducible (see \cite[Lemma~2.17]{HLMPPW}). Its transition matrix and stationary distribution are given by
	\[ P((i',\rho'),(i,\rho)) = \alpha_i \one \{\rho = \hat{a}_i \rho'\}, \qquad \pi((i,\rho)) = \frac{\alpha_i}{\kappa}. \]
For $\rho = \hat{a}_i \rho'$ we take length function 
	\begin{equation} \label{e.length} \ell((i',\rho'), (i,\rho)) = \av_{i\to z}(\rho) = |\rho'| - |\rho| + 1. \end{equation}
The aperiodicity condition \eqref{e.aperiodic} is trivially satisfied because the chain has loops of length one: $\hat{a}_z \rho = \rho$ and $\ell((z,\rho),(z,\rho)) = 1$.  

The normalizing constant of Proposition~\ref{t.marathon},
		\begin{align*} Z 
		&= \sum_{i,i' \in V} \sum_{\rho \in \Rec(z)} \frac{\alpha_{i'}}{\kappa} \alpha_i \av_{i \to z}(\rho)
		\end{align*}
has the interpretation of the mean burst size in stationarity, which must be $1$ by conservation of chips. To verify this formally, since $\hat{a}_i$ is a permutation of $\Rec(z)$, we have
	\[ \sum_{\rho \in \Rec(z)} \av_{i\to z} (\hat{a}_i \rho) = \sum_{\rho} |\rho| - \sum_{\rho} |\hat{a}_i \rho| + \sum_\rho 1 = \# \Rec(z) = \kappa, \]
so $Z=(\sum_{i \in V} \alpha_i )^2 = 1$. 

We now turn to the proof of our main theorem.

\begin{proof}[Proof of Theorem~\ref{t.main}]
By Lemma~\ref{l.rectest} the threshold time equals
	\[ \tau = \min \{k \,:\, m_k \geq 0 \}. \]
By Lemma~\ref{l.intertwining},
	\[ m_k = m_0 + \sum_{j=1}^k \av_{i_j \to z}(\rho_{j}). \]
Now we use Proposition~\ref{t.marathon} for the Markov chain $X_k = (i_k,\rho_k)$ with $n = -m_0$ and length function \eqref{e.length}. Since $|s_0| = |\rho_0| + m_0$ and $0 \leq |\rho_0| \leq \# E$ we have $n \to \infty$ as $|s_0| \to -\infty$. Hence
	\begin{multline*}
	\PP_{s_0} \{ (i_{\tau-1},\rho_{\tau-1},i_\tau, \rho_\tau, m_\tau) = (i',\rho',i,\rho,m) \} \\
	 	\begin{aligned}
	 &= \PP_{s_0} \{ (X_{\tau-1},X_\tau,\lambda_\tau-n) = ((i',\rho'), (i,\rho), m) \} \\
	 &\to \frac{1}{Z} \frac{\alpha_{i'}}{\kappa} \alpha_i \one \{\rho = \hat{a}_i \rho' \} \one \{0 \leq m \leq |\rho'| - |\rho| \}
	 	\end{aligned}
	\end{multline*}
as $|s_0| \to -\infty$. By the remark preceding the proof, $Z=1$. Summing over $i'$ and $\rho'$ yields Theorem~\ref{t.main}.
\end{proof}

We remark that the above proof also shows that the random vertex $i_{\tau-1}$ is independent of the triple $(i_\tau, \rho_\tau, m_\tau)$ in the $|s_0| \to -\infty$ limit.

Corollaries~\ref{c.sinkatsource},~\ref{c.straw},~\ref{c.marginal} and~\ref{c.avalanche} follow readily from Theorem~\ref{t.main}. We conclude this section by proving Corollary~\ref{c.densities} and hence Conjecture~\ref{c.pppr}.

\begin{proof}[Proof of Corollary~\ref{c.densities}]
Consider the $z$-recurrent decomposition of the closed chain
	\[ s_k = \rho_k^z + m_k^z \delta_z + \Delta v_k^z \]
Since $| \Delta v_k^z | = 0$ we have $|s_k| = |\rho_k^z| + m_k^z$.  Conditioned on $i_\tau = z$ we have $m_\tau^z = 0$ and hence $|s_\tau| = |\rho_\tau^z|$. Now
	\begin{align} \PP_{s_0} \{ |s_\tau|=n \} &= \sum_{z \in V} \PP_{s_0} \{ i_\tau=z,\, |s_\tau| = n \} \nonumber \\
		&= \sum_{z \in V} \PP_{s_0} \{i_\tau = z,\, |\rho_\tau^z|=n \} \label{e.onestepaway} 
	\end{align}
By Corollary~\ref{c.sinkatsource}, since $\rho_\tau^z = R_z(s_\tau)$, the right side of \eqref{e.onestepaway} converges as $|s_0| \to -\infty$ to
	\[ \sum_{z \in V} \frac{\alpha_z}{\kappa} \sum_{\rho \in \Rec(z)} \one \{ |\rho| = n \}. \]
By the theorem of Perrot and Pham \cite{PP}, the inner sum does not depend on $z$.
\end{proof}

\section{The threshold wave}
\label{s.wave}

In this section we assume 
that $G$ is undirected.
Dhar and Manna \cite{DM} and Ivashkevich, Ktitarev and Preizzhev \cite{IKP} introduced a decomposition of abelian sandpile avalanches into smaller toppling events called \emph{waves}: given $\rho \in \Rec(z)$, during each wave in stabilizing $\rho + \delta_i$, the source $i$ topples once and then the resulting configuration is stabilized on $V-\{i,z\}$. This procedure is repeated until vertex $i$ becomes stable. To restate this more formally, we can define a wave operator
	\[ \Wave(\eta) = \Stab_{\{i,z\}}(\eta + \Delta \delta_i) \]
acting on the set of $\eta: V \to \Z$ such that exactly one vertex $i \in V-\{z\}$ satisfies $\eta(i) \geq \deg(i)$; the subscript $\{i,z\}$ indicates that $i$ and $z$ are forbidden to topple. 
A \emph{zeroth wave} from $i$ to $z$ is a pair $(\rho,\rho+\delta_i)$ where $\rho \in \Rec(z)$. A $j$th wave from $i$ to $z$, for $j\geq 1$, is a pair $(\Wave^{j-1}(\rho+\delta_i),\Wave^{j}(\rho+\delta_i))$ such that $\rho \in \Rec(z)$ and $\Wave^{j-1}(\rho+\delta_i)(i) = \deg(i)$. This pair is called a \emph{last wave} if $\Wave^j(\rho+\delta_i)(i)<\deg(i)$.

Ivashkevich, Ktitarev and Priezzhev \cite{IKP} extended the burning bijection of Majumdar and Dhar \cite{MD} to give a bijection between the set of waves
from $i$ to $z$ 
and the set of spanning forests of $G$ with (at most) two components, a possibly empty component $t_i$ rooted at $i$ and a nonempty component $t_z$ rooted at $z$ 
(Table~\ref{table.waves}). 
The vertex set of $t_i$ is the set of sites that topple during the corresponding wave. Each site topples at most once per wave, so the number of chips falling into the sink equals the number of edges of $G$ adjoining $t_i$ with $z$.  For a wave resulting in configuration $\eta$ we denote this number by $b_{i\to z}(\eta)$; it is the analogue of burst size for waves.

\begin{table}[here]
\begin{tabular}{rcl}
waves from $i$ to $z$ & \; $\longleftrightarrow$ \; & forests $t_i \cup t_z$ \\
zeroth waves from $i$ to $z$ & \; $\longleftrightarrow$ \; & trees $t_i \cup t_z$ with $t_i = \emptyset$ \\
last waves from $i$ to $z$ & \; $\longleftrightarrow$ \; &  forests $t_i \cup t_z$ with $i \sim t_z$ \\
bursting waves from $i$ to $z$ & \; $\longleftrightarrow$ \; & forests $t_i \cup t_z$ with $z \sim t_i$
\end{tabular}
\medskip
\caption{\small Bijections between waves and $2$-component spanning forests \cite{IKP}. A \emph{bursting wave} is one in which chips fall into the sink.}
\label{table.waves}
\end{table}

Consider a refinement $(\eta_j)_{j \geq 0}$ of the open chain in which a single time step consists of performing one wave (i.e., applying $\Wave$) if vertex $i_j$ is unstable, and otherwise adding one chip at an independent random vertex $i_{j+1}$. (In the former case we keep $i_{j+1}=i_j$.)
The process $m_k$ of \eqref{e.zrec.closed} has a refinement 
keeping track of the total number of chips falling into the sink,
	\[ m_j = m_0 + \sum_{r=1}^j b_{i_{r} \to z}(\eta_r). \]
In view of \eqref{e.thresholdm} it is natural to define the threshold of the refined chain as 
$\tau = \min \{ j \,:\, m_j \geq 0 \}$. This $\tau$ singles out a particular bursting wave of the threshold avalanche.
Applying the Markov renewal theorem (Proposition~\ref{t.marathon}) to the chain $(i_j,\eta_j)$, we find
	\begin{equation} \label{e.waveburst} \PP_{s_0} \{ (i_\tau,\eta_\tau,m_\tau) = (i,\eta,m) \} \to \frac{\alpha_i}{Z} \one \{0 \leq m \leq b_{i \to z}(\eta)-1 \} \end{equation}
as $|s_0| \to -\infty$.

Let us see how to interpret this result in terms of the uniform spanning tree and in the process compute the normalizing constant $Z$. 
The triples $(i,\eta,m)$ for which the right side of \eqref{e.waveburst} does not vanish are of two types.  First, there are the triples with $(z,\eta,0)$ for $\eta \in \Rec(z)$, which correspond to dropping a chip directly into the sink. 
After choosing an arbitrary ordering of the edges incident to $z$, the remaining triples are in bijection with pairs $(t_i \cup t_z, e)$ where $t_i \cup t_z$ is a $2$-component spanning forest rooted at $\{i,z\}$ and $e$ is an edge adjoining $t_i$ with $z$.  Then $t_i \cup t_z \cup \{e\}$ is a spanning tree.  Conversely, given $i \neq z$, every spanning tree $t$ decomposes uniquely as $t_i \cup t_z \cup \{e\}$ where $e$ is the last edge on the path from $i$ to $z$ in $t$. Therefore $Z=\kappa$, the number of spanning trees of $G$. Denoting by $A_\tau$ the set of sites that topple during the threshold wave, we find that
	\begin{equation} \label{e.topplingset} \PP_{s_0} \{ i_\tau = i, A_\tau = A \} \to \frac{\alpha_i}{\kappa} \sum_{t} \one \{ t_i = A \}  \end{equation}
as $|s_0| \to -\infty$, where the sum is over spanning trees $t$ of $G$, and $t_i$ is the set of vertices $i'$ such that the paths $i\to z$ and $i' \to z$ in $t$ have the same last edge.


\section{Infinite volume limits}

Athreya and J{\'a}rai \cite{AJ} showed that the stationary distribution of the open chain on finite subsets of $\Z^d$ has a limit which is a measure on sandpiles on all of $\Z^d$. It would be interesting to prove that the $|s_0| \to -\infty$ limit of the threshold state (or its recurrent representative, whose distribution is given by Corollary~\ref{c.marginal}) has such an infinite volume limit on $\Z^d$.  There are actually three different limits one could take: epicenter at origin, sink at origin, or neither at origin.  The case when both epicenter and sink are at the origin is exactly the stationary state by Corollary~\ref{c.sinkatsource}.

In addition, it may be time to revisit the questions about stabilizability in infinite volume posed by Fey, Meester and Redig \cite{FMR}.

\section*{Acknowledgments}

The author wishes to thank Antal J{\'a}rai, Yuval Peres, Laurent Saloff-Coste and David Wilson for inspiring conversations.

\end{document}